\newtheorem {theorem}    {Theorem}[section]
\newtheorem {lemma}      [theorem]    {Lemma}
\newtheorem {corollary}  [theorem]    {Corollary}
\newtheorem {proposition}[theorem]    {Proposition}
\newtheorem {property} [theorem] {Property}
\theoremstyle{definition}
\newtheorem{definition}[theorem]{Definition}
\newtheorem{remark}[theorem]{Remark}
\newtheorem{ex}[theorem]{Example}
\def\la{\langle}
\def\ra{\rangle}
\def\Z{\mathbb{Z}}
\def\R{\mathbb{R}}
\def\C{\mathbb{C}}
\newcommand{\defeq}{\mathbin{:=}}
\def\a{\alpha}
\numberwithin{equation}{section}
\newenvironment{red}{\relax\color{red}}{\relax}
\newenvironment{blue}{\relax\color{blue}}{\hspace*{.5ex}\relax}
\newcommand{\ber}{\begin{red}}
\newcommand{\er}{\end{red}}
\newcommand{\beb}{\begin{blue}}
\newcommand{\eb}{\end{blue}}
\begin{document}

\title[Entirety of cuspidal Eisenstein series on  Kac--Moody groups]{Entirety of certain cuspidal Eisenstein series on Kac--Moody groups}

\date{\today}

\author[L. Carbone]{Lisa Carbone$^{\dagger}$}
\address{Department of Mathematics, Rutgers University, Piscataway, NJ 08854-8019, U.S.A.}
\email{carbonel@math.rutgers.edu}
\thanks{$^{\dagger}$This work was
supported in part by the Simons Foundation, Mathematics and Physical Sciences Collaboration Grants for Mathematicians, 
Award Number: 422182.}

\author[K.-H. Lee]{Kyu-Hwan Lee$^{\star}$}
\thanks{$^{\star}$This work was partially supported by a grant from the Simons Foundation (\#712100).}

\address{Department of
Mathematics, University of Connecticut, Storrs, CT 06269, U.S.A.}
\email{khlee@math.uconn.edu}

\author[D. Liu]{Dongwen Liu$^{\diamond}$}
\thanks{$^{\diamond}$This work was partially supported by  the Fundamental Research Funds for the Central Universities 2016QNA2002.}
\address{School of Mathematical Sciences, Zhejiang University, Hangzhou 310027, P.R. China}\email{maliu@zju.edu.cn}


\begin{abstract}  Let $G$ be an infinite-dimensional representation-theoretic Kac--Moody  group associated to a nonsingular symmetrizable generalized Cartan matrix. We consider Eisenstein series on $G$ induced from unramified cusp forms on finite-dimensional Levi subgroups of maximal parabolic subgroups. Under a natural condition on maximal parabolic subgroups, we prove that the cuspidal Eisenstein series are entire on the full complex plane.
\end{abstract}

\maketitle

\section{Introduction}\label{sec:intro}

Meromorphic continuation of Eisenstein series on reductive groups, established in the seminal work of Langlands \cite{LA1}, has been a foundational basis for other developments in Langlands' program. Most notably, the Langlands--Shahidi method (e.g., \cite{KimSh, Kim}) in the study of $L$--functions originates from the idea of exploiting analytic properties of Eisenstein series together with the fact that $L$-functions appear in the Fourier coefficients of Eisenstein series.

The extension of the theory of Eisenstein series to Kac--Moody groups has been developed with anticipation of its potential roles in some of the central problems in number theory  \cite{BFH,Sh}. Indeed, starting with Garland's pioneering work \cite{G99, G04, G06}, there has been a significant body of work \cite{GMS1, GMS2, GMS3, GMS4, Li, GMP, G11} on Eisenstein series on affine Kac--Moody groups, including the function field case \cite{BK,Ka, P, LL}.   
Beyond affine Kac--Moody groups, Eisenstein series on rank 2 symmetric hyperbolic Kac--Moody groups were studied in \cite{CLL}. 
For a more general class of Kac--Moody groups, in the paper \cite{CGLLM}, the absolute convergence of Eisenstein series  is established for spectral parameters in the Godement range.

In the process of these developments, 
a striking difference between the finite-dimensional case and the affine case  was observed in  \cite{BK,GMP}, where the affine Eisenstein series induced from cusp forms on finite-dimensional Levi subgroups were shown to be {\it  entire} and not just meromorphic as they are in the finite-dimensional case. This phenomenon is not restricted to the affine case, and it was shown in  \cite{CLL} that cuspidal Eisenstein series on rank 2 symmetric hyperbolic Kac--Moody groups are also entire.

To understand the analytic properties of cuspidal Eisenstein series on Kac--Moody groups, one may naturally ask:~{\em For which parabolic subgroups are cuspidal Eisenstein series on  Kac--Moody groups entire?} For a general maximal parabolic subgroup, even in the finite dimensional case cuspidal Eisenstein series will not be entire, as is readily seen for  the real analytic Eisenstein series on $SL_n$.

In this paper, we give a natural condition  on parabolic subgroups (Property RD, below and Section~\ref{section-hol})
and show that  cuspidal Eisenstein series attached to a parabolic subgroup satisfying Property RD is holomorphic on the full complex plane.

More precisely, let $G$ be a representation-theoretic  Kac--Moody group, and let $\mathfrak g$ be the corresponding real Kac--Moody algebra with a fixed Cartan subalgebra $\mathfrak h$, Weyl group $W$ and the set $\{\alpha_i\}_{i \in I}$ of simple roots, where $I$ is the set of indices for the simple roots.  We assume that $\frak g$ is infinite-dimensional and non-affine since the other cases have been studied well as mentioned above.  

The real group $G_{\mathbb R}$ has
the Iwasawa decomposition $G_{\mathbb R}=UA^+K$ (\cite{DGH}), where $U$ is a maximal pro-unipotent subgroup, $A^+$ is the connected component of a maximal torus, and $K$ is a subgroup of $G_\R$ analogous to the maximal compact subgroup in the finite-dimensional theory. Denote by $\Gamma$ the arithmetic subgroup of $G_{\mathbb R}$ (Subsection~\ref{Zform}), and by  
$\mathcal{C}\subset\frak{h}$ the open fundamental chamber.  Using the exponential map $\exp: \mathfrak{h}\to A^+$, set $A_\mathcal{C}=\exp \mathcal{C}$.

Let $P$ be a maximal parabolic subgroup of $G_\R$ with fixed Levi decomposition and associated finite-dimensional Levi subgroup $M$, which is associated with a subset $\theta \subseteq I$. We denote by $\alpha_P$ the simple root  associated to the one element index in $I\backslash \theta$. Let $L$ be the derived subgroup of $M$.  
For a cusp form $f$ on $(L\cap\Gamma) \backslash L$, we recall that $f$ is unramified if $f$ is right invariant under the action of $L\cap K$.

For such an $f$, we define the Eisenstein series $E_f(s,g)$, $s \in \mathbb C$, $g \in G_\R$, in analogy with the classical case (see \eqref{def-cusp-E}).
In Proposition \ref{cusp}, we use the reduction mechanism of \cite{Bo,MW} to obtain absolute convergence of the cuspidal Eisenstein series $E_f(s,g)$ from that of Borel Eisenstein series established in \cite{CGLLM}.

Denote by $\varpi_P$ the fundamental weight associated to $\alpha_P$, and by $\rho_M$ the Weyl vector of $M$. Let
\[
W^\theta=\{w\in W: w^{-1}\alpha_i>0, i\in\theta\}.
\]
Then $P$ is said to satisfy {\em Property RD} if there exists a constant $D>0$, such that for every nontrivial element $w\in W^\theta$ we have
$$\langle D\varpi_P+\rho_M, \alpha^\vee\rangle \leq 0$$ for any positive root $\alpha$ such that $w^{-1} \alpha <0$. Property RD states that the coefficient of the simple root  
$\alpha_P$ grows faster than the coefficients of the simple roots in the subset $\theta$.
This property allows us to make use of the rapid decay of cusp forms on parabolic subgroups.

Now we state our main theorem.

\begin{theorem}\label{hol-1}
Let $f$ be an unramified cusp form on $(L\cap\Gamma) \backslash L$. If the maximal parabolic subgroup $P$ satisfies Property RD, then for any compact subset $\mathfrak{S}$ of $A_\mathcal{C}$, there exists a measure zero subset $S_0$ of $(\Gamma\cap U)\backslash U\mathfrak{S}$ such that $E_f(s,g)$ is an entire function of $s\in\mathbb{C}$ for 
$g\in (\Gamma\cap U)\backslash U\mathfrak{S}K- S_0K$.
\end{theorem}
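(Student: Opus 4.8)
The plan is to prove that the series defining $E_f(s,g)$ converges absolutely and locally uniformly in $s$ on all of $\mathbb{C}$, for $g$ outside a null set, so that $E_f(s,g)$ is entire by Weierstrass's theorem. By Proposition~\ref{cusp} the series converges (hence is holomorphic) for $s$ in the Godement range $\mathrm{Re}(s)\gg 0$; the whole point is to remove this restriction, and the tool for doing so is the rapid decay of the cusp form $f$, which Property RD is tailored to exploit.

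First I would reduce the support of $g$. Because $f$ is unramified, i.e.\ right $(L\cap K)$-invariant, and the inducing character is built from the Iwasawa $A^+$-projection, the summand is right $K$-invariant in the appropriate sense; thus it suffices to estimate $E_f(s,ua)$ for $ua\in U\mathfrak{S}$. I would then unfold the sum over $(\Gamma\cap P)\backslash\Gamma$. Via the reduction mechanism of \cite{Bo,MW} already used in Proposition~\ref{cusp}, the coset representatives organize according to the minimal-length representatives $w\in W^\theta$, giving a decomposition
\[
E_f(s,ua)=\sum_{w\in W^\theta}E_f^{(w)}(s,ua),
\]
in which $E_f^{(w)}$ gathers the contributions of the Bruhat cell indexed by $w$, carrying the exponential factor governed by $w(s\varpi_P+\rho)$ multiplied by a value of $f$ on the Levi $L$.

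The heart of the matter is a uniform bound on each $E_f^{(w)}(s,ua)$. Here I would feed in the rapid decay of $f$: for every $N$ there is $C_N>0$ with $|f(x)|\le C_N\,a(x)^{-N\rho_M}$ as the torus component $a(x)$ moves into the cusp of $L$. Combining this with the inducing factor $a^{\mathrm{Re}(s)\varpi_P+\rho}$ produced by the $w$-cell, the effective exponent is controlled by the pairings $\langle D\varpi_P+\rho_M,\alpha^\vee\rangle$ over the positive roots $\alpha$ with $w^{-1}\alpha<0$. Property RD forces each such pairing to be $\le 0$ for every nontrivial $w\in W^\theta$, so by taking $N$ large relative to $|\mathrm{Re}(s)|$ the decay of $f$ dominates the growth of the character \emph{for every} $s\in\mathbb{C}$, not merely in a half-plane. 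Summing these geometric-type estimates over $w\in W^\theta$, and using the absolute convergence of the Borel Eisenstein series from \cite{CGLLM} to control the combinatorial size of the cells, yields absolute convergence of $\sum_w E_f^{(w)}$, locally uniformly in $s$ on all of $\mathbb{C}$.

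The main obstacle, and the origin of the exceptional set $S_0$, is that the decay estimate above is most naturally an integrated statement over the unipotent directions rather than a pointwise one. I would therefore establish the bound in the form $\int_{(\Gamma\cap U)\backslash U\mathfrak{S}}\sum_\gamma|\,\cdots\,|\,du<\infty$, locally uniformly for $s$ in each disk $|s|\le R$; by Tonelli this forces the inner sum to be finite for almost every $u$, the exceptional null set for radius $R$ being some $S_0^{(R)}$. Setting $S_0=\bigcup_{R\in\mathbb{N}}S_0^{(R)}$, still of measure zero, one obtains for every $g\in(\Gamma\cap U)\backslash U\mathfrak{S}K-S_0K$ the locally uniform absolute convergence of the defining series on all of $\mathbb{C}$, whence $E_f(s,g)$ is entire in $s$. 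The delicate point to verify is that the passage from the integrated bound back to pointwise finiteness is compatible with the $K$-reduction and with the measure-preserving Iwasawa coordinates on $\mathfrak{S}$, so that the single null set $S_0$ indeed works simultaneously for all $s$.
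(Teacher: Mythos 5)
Your proposal follows essentially the same route as the paper: unfold over the Bruhat cells indexed by $W^\theta$, trade the rapid decay $|f|\le C_N\,\mathrm{Iw}_{A_1^+}(\cdot)^{-N\rho_M}$ against the growth of the inducing character via Property RD (the paper packages this trade-off as the inequality $\mathrm{Iw}_{A^+}(\gamma g)^{-\rho_M}\le \mathrm{Iw}_{A^+}(\gamma g)^{D\varpi_P}$ of Lemma~\ref{ine}, choosing $n=(s_0-\operatorname{Re}(s))/D$), dominate the whole series by the auxiliary series $E(s_0,g)$ at a fixed real point in the convergence range of Proposition~\ref{cusp}, and absorb the $s_0$-dependence of the exceptional set into a countable union of null sets. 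The only detail your sketch glosses over is that the exponent comparison also involves the term $\langle w^{-1}(D\varpi_P+\rho_M),\log a(g)\rangle$, whose nonnegativity is what forces $\mathfrak{S}\subset A_{\mathcal C}$ and rests on the $\alpha_P$-coefficient of $\varpi_P$ being negative (Lemmas~\ref{lem2-1} and~\ref{lem3-1}).
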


Here a measure zero set appears because absolute convergence is only established almost everywhere for the Eisenstein series in general. In the setting of everywhere convergence (as in \cite{CGLLM}), the measure zero set is not needed. See Theorem \ref{hol-2} for more precise statements.

The main idea of the proof is to exploit rapid decay of a cusp form on the maximal parabolic subgroup, guaranteed by Property RD. This property may be considered as extraction of the essential properties of maximal parabolic subgroups used in the earlier results on affine Kac--Moody groups \cite{GMP} and on rank 2 hyperbolic groups \cite{CLL}.

In Section \ref{sec-ample} we show that a large class of Kac--Moody groups  have parabolic subgroups satisfying Property RD, including
 the Kac--Moody group $G$ associated with the Feingold--Frenkel rank $3$ hyperbolic Kac--Moody algebra \cite{FF}.  We note that the corresponding Kac--Moody group did not satisfy the conditions in \cite{CGLLM} that allowed the authors to prove convergence of Eisenstein series.
 
 It would be an interesting question for future investigation to characterize
 the full class of Kac--Moody groups that admit parabolic subgroups satisfying Property RD.

\subsection*{Acknowledgments}
We are grateful to Stephen D. Miller for his useful comments, and would like to thank the anonymous referees for helpful comments. 

\section{Kac--Moody groups and Borel Eisenstein series} 

In this section we recall the construction of Kac--Moody groups and then state the convergence results for Borel Eisenstein series proven in \cite{CGLLM} and used in this work. 
We try to keep the exposition self-contained as much as possible, and refer other details to {\it loc. cit.} and the references therein. 

\subsection{Kac-Moody groups}\label{Zform}
Let $I=\{ 1, 2, \dots , r \}$,
${\sf A}=(a_{ij})_{i,j \in I}$ be an $r \times r$  symmetrizable  generalized Cartan matrix, and $(\mathfrak h_{\mathbb C}, \Delta, \Delta^\vee)$ be a realization of ${\sf A}$, where $\Delta=\{ \alpha_1, ... , \alpha_r \} \subset \mathfrak h_{\mathbb C}^*$ and  $\Delta^\vee=\{ \alpha^\vee_1, ... , \alpha^\vee_r \} \subset \mathfrak h_{\mathbb C}$ are the set of simple roots and set of simple coroots, respectively.

Following \cite{CGLLM}, we assume that ${\sf A}$ is nonsingular, so that $\mathfrak{h}_{\mathbb C}$ and $\mathfrak{h}_{\mathbb C}^*$  are spanned by the simple roots $\alpha_i$ and simple coroots $\alpha_i^\vee$, respectively. Recall that  $\langle\alpha_j, \alpha^\vee_i\rangle = a_{ij}$ for $i,j \in I$, where $\langle \cdot,\cdot\rangle$ is the natural pairing between $\frak{h}_{\mathbb C}^*$ and $\frak{h}_{\mathbb C}$.
Denote the fundamental weights by $\varpi_i\in\mathfrak{h}_{\mathbb C}^*$, $i\in I$, which form the   basis of $\frak h_{\mathbb C}^*$ dual to the $\alpha_i^\vee$. 

Let $\mathfrak g_{\C}=\mathfrak g_{\C}({\sf A})$ be the Kac--Moody algebra associated to $(\mathfrak h, \Delta, \Delta^\vee)$, which we assume to be infinite-dimensional 
throughout this paper.  We denote by $\Phi$ the set of roots of $\mathfrak g_{\C}$ and have $\Phi = \Phi_+ \sqcup \Phi_{-}$, where $\Phi_+$ (resp. $\Phi_{-}$) is the set of positive (resp. negative) roots corresponding to the choice of $\Delta$.  Let $w_i:=w_{\a_i}$ denote the simple Weyl reflection associated to the simple root $\alpha_i$. The $w_i$ for $i\in I$  generate the Weyl group $W$ of $\frak g_\C$.  

Let $\frak{g}$ be the real Lie subalgebra generated by the Chevalley generators $e_i$ and $f_i$ for $i\in I$, so that $\frak g_\C=\frak g\otimes_\R \C$. As is standard (see \cite[\S4]{CG}), we can define the $\mathbb{Z}$-form $\mathcal{U}_{\mathbb{Z}}$ of the universal enveloping algebra $\mathcal{U}_{\mathbb{C}}$ of $\frak{g}$ using the Chevalley generators.

We now review the representation theoretic Kac--Moody groups  $G_{F}$ associated to $\frak g$ and a field $F$ of characteristic zero, following  \cite{CG}. 
Let $(\pi,V)$ denote the unique irreducible highest weight module for $\mathfrak{g}$ corresponding to a choice of some regular dominant integral weight, and
fix a nonzero highest weight vector $v \in V$.   Then we have a lattice 
$$V_{\mathbb{Z}}\  =\ \mathcal{U}_{\mathbb{Z}}\cdot v,$$
and we put $V_{F}=F\otimes_{\mathbb{Z}}V_{\mathbb{Z}}$.  Since $V$ is integrable, it makes sense to define for  $s,t\in F$
 and $i\in I$ the operators
$$ \ \ \ \ \ \ \ u_{\alpha_i}(s)\ =\ \exp(\pi(se_i))$$
$$\text{and}~~u_{-\alpha_i}(t)\ =\ \exp(\pi(tf_i)),$$
  which act locally finitely on $V_F$ and thus define elements of  $\textrm{Aut}(V_{F})$.

For $t \in F^{\times}$ and $i\in I$  we set \[ w_{\alpha_i}(t) \  = \ u_{\alpha_i}(t)  u_{-\alpha_i}(-t^{-1}) u_{\alpha_i}(t)\] and define \[ h_{\alpha_i}(t)\ = \  w_{\alpha_i}(t) w_{\alpha_i}(1)^{-1} .\]
Each simple root $\alpha_j$ defines a character on $\{ h_{\alpha_i}(t)\ :\ t\in  F^{\times}\}$ by
\begin{equation}\label{halphaialphajchar}
  h_{\alpha_i}(t)^{\alpha_j} \ = \ t^{\langle \alpha_j,\alpha_i^\vee\rangle}.
\end{equation}
The subgroup $\langle w_{\alpha_i}(1):i\in I\rangle$ of $\textrm{Aut}(V_{F})$   contains a full set of Weyl group representatives.
For a real root $\alpha$, choose once for all an expression  
  $\alpha=w_{\beta_1}\cdots w_{\beta_\ell}\alpha_i$ for some $i\in I$ and $\beta_1,\ldots,\beta_\ell\in\Delta$, and define the
corresponding one-parameter subgroup 
\[
  u_\alpha(s) \ \ = \ \ w u_{\alpha_i}( s)w^{-1} \in \textrm{Aut}(V_{F}) \qquad (s \in F),
\]
where   $w =w_{\beta_1}(1)\cdots w_{\beta_\ell}(1)$. 

Define $$G^0_{F}\ = \ \langle u_{\alpha_i}(s), u_{-\alpha_i}(t) : s,t\in F, \ i \in I \rangle\ \subset \ \textrm{Aut}(V_F).$$
Following  \cite[\S5]{CG},  we choose a  coherently ordered basis    $\Psi$ of $V_\Z$, and denote by $B^0_F$ the subgroup of $G^0_F$ consisting of  elements which act upper-triangularly  with respect to $\Psi$. The basis $\Psi$ induces certain topologies on $B^0_F$ and $G^0_F$, and we let $B_F$ and $G_F$ denote their completions respectively. 
 When the field $F$ is unspecified, $B$ should be interpreted as $B_\R$.

The following subgroups of $G_F$ are of particular importance to us:
 \begin{itemize}
 \item  $A_F=\langle h_{\alpha_i}(s) : s\in F^\times, i \in I\rangle$; and
 \item  $U_F\subset B_F$ is defined exactly as $B_F$, but with the additional stipulation that elements act unipotently upper-triangularly with respect to $\Psi$.  It contains all subgroups parameterized by the $u_\alpha(\cdot)$, where $\alpha\in \Phi_+$ is a real root.  Then $B_F=U_FA_F=A_FU_F$.  
 \end{itemize}
When no subscript is given, $A$ and $U$ should be interpreted as $A_\R$ and $U_\R$.
We also have the following  subgroups specific to the situation $F=\R$:
 \begin{itemize}
\item $K$ is the subgroup of $G_\R$ generated by all  $\exp(t(e_i-f_i))$, $t\in \R$ and $i\in I$ (see \cite{KP}); and
    \item   $A^+=\langle h_{\alpha_i}(s) : s\in\mathbb{R}_{>0}, i \in I\rangle$.  In fact, $(\mathbb{R}_{>0})^r$ can be identified with $A^+$ via the isomorphism  $(x_1,\dots, x_r)\mapsto h_{\alpha_1}(x_1)\cdots h_{\alpha_r}(x_r)$, under which $A^+$ has the Haar measure $da$ corresponding to $\prod_{i=1}^r \frac{dx_i}{x_i}$.
\end{itemize}

\begin{theorem}[\cite{DGH}] \label{Iwasawa}
We have the Iwasawa decomposition
\begin{equation}\label{iwasawa}
G_\R \ = \  UA^+K
\end{equation} with uniqueness of expression.
\end{theorem}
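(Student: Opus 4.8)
The plan is to reconstruct the argument of \cite{DGH} inside the highest weight module $(\pi,V)$ that defines $G_\R$. First I would equip $V$ with the positive-definite contravariant Hermitian form $\langle\cdot,\cdot\rangle$ normalized by $\pi(e_i)^*=\pi(f_i)$; this form exists and is positive definite because the highest weight is regular dominant integral, and distinct weight spaces are orthogonal under it. Relative to the coherently ordered basis $\Psi$, enumerated $v=\psi_0,\psi_1,\psi_2,\dots$ so that weights are nonincreasing, the three subgroups then have transparent shapes. Since each $e_i-f_i$ is skew-adjoint, the generators $\exp(t(e_i-f_i))$ are unitary, so $K$ acts by unitary operators; since weight spaces are orthogonal, $A^+$ acts diagonally with positive entries ($h_{\alpha_i}(x)$ scales a weight-$\mu$ vector by $x^{\langle\mu,\alpha_i^\vee\rangle}>0$); and each $u_\alpha(s)$ with $\alpha\in\Phi_+$ raises weights, so $U$, and hence $B=UA^+$, acts by upper-triangular operators with positive diagonal in the order of $\Psi$. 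In particular each truncation $V_n=\mathrm{span}(\psi_0,\dots,\psi_n)$ is $B$-stable.

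With this dictionary, existence becomes a Gram--Schmidt alignment of flags. Writing $g=uak=bk$ with $b=ua\in B$ is equivalent to producing a unitary $k$ with $kg^{-1}\in B$, i.e.\ with $kg^{-1}$ preserving the flag $\{V_n\}$ and acting with positive diagonal. I would obtain such a $k$ by orthonormalizing the images $g^{-1}\psi_0,g^{-1}\psi_1,\dots$ in the order of $\Psi$: this yields an orthonormal system $q_0,q_1,\dots$ with $\mathrm{span}(q_0,\dots,q_n)=g^{-1}V_n$ and positive Gram--Schmidt coefficients, and the unitary determined by $q_n\mapsto\psi_n$ then satisfies $kg^{-1}V_n=V_n$ for all $n$, with $(kg^{-1})\psi_n$ having positive diagonal coefficient. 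Because the process runs from the highest weight downward, each $q_n$ depends only on $g^{-1}\psi_0,\dots,g^{-1}\psi_n$, so no ``bottom'' anchor is needed and the construction is well defined; setting $b=(kg^{-1})^{-1}$ gives $g=bk$ with $b$ upper-triangular and positive on the diagonal.

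The main obstacle is to show that the operators $k$ and $b$ produced this way actually lie in the Kac--Moody subgroups $K$ and $B_\R$, rather than merely in the groups of all unitary, resp.\ upper-triangular, invertible operators on the completion $\hat V$. This is precisely the content that \cite{DGH} must supply, and it is where the representation-theoretic construction of \cite{CG} enters: $B_\R$ is the completion of the upper-triangular part $B^0_\R$ of $G^0_\R$ in the topology induced by $\Psi$, and one must verify both that $b$ is a limit of such elements and that the companion unitary $k=b^{-1}g$ lies in $K$. Controlling this convergence in the completion, and identifying the abstract factors with genuine group elements, is the delicate part; the factorization $B_\R=U_\R A^+$ of \cite{CG} then splits $b=ua$ with $u\in U_\R$ and $a\in A^+$.

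Uniqueness is comparatively soft. If $u_1a_1k_1=u_2a_2k_2$, set $b_j=u_ja_j\in B$; then $b_1^{-1}b_2=k_1k_2^{-1}\in B\cap K$. An element of $B\cap K$ is simultaneously unitary and upper-triangular with positive diagonal, hence preserves every $V_n$ and every orthocomplement line $V_n\ominus V_{n-1}$, so it is diagonal; a diagonal unitary with positive entries is the identity, giving $B\cap K=\{1\}$. Thus $b_1=b_2$ and $k_1=k_2$, and the unique factorization $b=ua$ (using $U\cap A^+=\{1\}$) forces $u_1=u_2$ and $a_1=a_2$, completing the argument.
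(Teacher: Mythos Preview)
Your route is genuinely different from the paper's. The paper does not argue inside $V$ at all: it quotes \cite{DGH}, where the Iwasawa decomposition for the \emph{uncompleted} group $G^0_\R$ is obtained from the theory of twin BN-pairs, and then passes to the completion by a one-line Bruhat manipulation,
\[
G_\R = \bigcup_{w \in W} B w B= \bigcup_{w \in W} B w U_w=BG^0=BB^0 K=BK = UA^+ K,
\]
with $U_w = U\cap w^{-1}B^0_-w\subset G^0$. No Hermitian form, no Gram--Schmidt, no flag argument enters; existence is purely combinatorial via the $(B,N)$-pair structure.

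Your operator-theoretic outline is natural, and your descriptions of how $K$, $A^+$, $U$ act on $V$ are correct, as is the uniqueness paragraph (the argument that $B\cap K=\{1\}$ via ``unitary, upper-triangular, positive diagonal $\Rightarrow$ identity'' works fine on each finite truncation $V_n$). But the step you yourself flag as the main obstacle is a real gap, and it is \emph{not} what \cite{DGH} supplies. Gram--Schmidt hands you an abstract unitary $k$ on the completion of $V$ and an abstract upper-triangular $b$; you then need (i) that every element of $G_\R$ which is unitary for the contravariant form lies in the subgroup $K$ \emph{generated by} the $\exp(t(e_i-f_i))$, and (ii) that $b$ is actually a $\Psi$-limit of elements of $B^0_\R$. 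Statement (i) is a nontrivial Kac--Peterson-type theorem, not a byproduct of the construction, and \cite{DGH} does not prove it---it sidesteps the issue entirely via BN-pairs. So as written, your existence argument reduces the theorem to another theorem of comparable depth rather than proving it; the paper's approach, by contrast, never needs to identify abstract operators with Kac--Moody group elements.
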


\begin{remark}
In the paper \cite{DGH}, twin BN-pairs are used to prove the Iwasawa decomposition for the group $G^0$. The Iwasawa decomposition for $G_\R$ follows, since
\[ G_\R = \bigcup_{w \in W} B w B= \bigcup_{w \in W} B w U_w=BG^0=BB^0 K=BK = UA^+ K, \]
where $U_{w}:= U \cap w^{-1} B^0_- w \subset G^0$ with the opposite Borel denoted by $B^0_-$.  
\end{remark}

Let $u(g)$, $a(g)$, and $k(g)$ denote the projections   from $G_{\R}$ onto  each of the respective factors in (\ref{iwasawa}).
We define the discrete group
$\Gamma :=G_\R\cap \mathrm{Aut}(V_{\mathbb{Z}})=\{\gamma\in G_\R : \gamma\cdot V_{\mathbb{Z}}= V_{\mathbb{Z}}\}.$
As in \cite{G04}, it can be shown that $(\Gamma\cap U) \backslash U$ is the projective limit of a projective family of finite-dimensional compact nil-manifolds  and thus admits
a projective limit measure $du$ which is a right $U$-invariant probability measure.

\subsection{Borel Eisenstein series}\label{constant-term}

Using the identification of $A^+$ with $(\R_{>0})^r$, each element of $\frak h_{\mathbb C}^*$ gives rise to a quasicharacter of $A^+$. Let $\lambda\in \frak h_{\mathbb C}^*$ and let $\rho\in\mathfrak{h}_{\mathbb C}^*$ be  the Weyl vector, which is characterized by $\langle\rho,\alpha_i^\vee\rangle =1$, $i\in I$.  We set
  \begin{equation}\label{Philambdadef}
    \aligned
    \Phi_{\lambda}:& \ G_\R\to {\mathbb C}^{\times}, \\
    & \ g  \mapsto a(g)^{\lambda+\rho},
    \endaligned
  \end{equation}
   which is well-defined by the uniqueness of the Iwasawa decomposition. 

 Let $B$ and $\Gamma$ be   as defined in Section \ref{Zform}.
 Define the Borel Eisenstein series on $G_\R$ to be the infinite formal sum
\[
E_{\lambda}(g)\quad \defeq \sum_{\gamma\in (\Gamma\cap{B})\backslash \Gamma}
\Phi_{\lambda}(\gamma g).
\]
We define for all $g\in G_\R$ the so-called ``upper triangular'' constant term\footnote{This is to be distinguished from the proposal in \cite{BK} to consider constant terms in uncompleted, or so-called ``lower triangular'', parabolics.}
\[
E^\sharp_\lambda(g)\ \defeq \ \int_{(\Gamma\cap U)\backslash U}E_\lambda(ug)du,
\]
which is left $U$-invariant and right $K$-invariant. It is immediate from the definition that $E^\sharp_\lambda(g)$ is determined by the $A^+$-component $a(g)$ of $g$ in the Iwasawa decomposition.

We now state the convergence results for the constant term $E^\sharp_\lambda(g)$ as well as the Eisenstein series $E_\lambda(g)$ itself obtained in \cite{CGLLM}. We will use these results later in this paper.


Let $\mathcal{C} \subset\mathfrak{h}$ be the open fundamental chamber
\[
\mathcal{C} \ = \ \{x\in \mathfrak{h}: \langle\alpha_i, x\rangle>0,~  i\in I\}.
\]
   Let $\mathfrak{C}$ denote the interior of the Tits cone $\mathop{\bigcup}_{w \in W}  w\overline{  \mathcal C}$   corresponding to $\mathcal{C}$ \cite[\S3.12]{K}.  Using the exponential map $\exp: \mathfrak{h}\to A^+$, set $A_\mathcal{C}=\exp \mathcal{C}$ and $A_\mathfrak{C}=\exp \mathfrak{C}$.
Let
\begin{equation} \label{C-star}
\mathcal{C}^* \ = \ \{\lambda\in\mathfrak{h}^*: \langle\lambda,\alpha_i^\vee\rangle>0, ~ i\in I\}.
\end{equation}

Based on a crucial lemma due to Looijenga \cite{Lo} together with some elementary estimate on the Riemann $\zeta$-function, the following theorem is proven in \cite{CGLLM}.  

\begin{theorem}\cite{CGLLM} \label{3.3}
If   $\lambda\in\mathfrak{h}^*_\mathbb{C}$ satisfies $\operatorname{Re}(\lambda  - \rho) \in \mathcal{C}^*$, then
$E^\sharp_\lambda(g)$ converges absolutely for $g\in UA_\mathfrak{C}K$, and in fact uniformly for $a(g)$ lying in any fixed compact subset of $A_\mathfrak{C}$.
\end{theorem}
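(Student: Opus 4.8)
The plan is to reduce the convergence of the constant term $E^\sharp_\lambda$ to the convergence of its Gindikin--Karpelevich expansion over the Weyl group, and then to control that infinite sum by Looijenga's lemma. First I would unfold the defining integral: inserting $E_\lambda(ug)=\sum_{\gamma\in(\Gamma\cap B)\backslash\Gamma}\Phi_\lambda(\gamma u g)$ and decomposing $(\Gamma\cap B)\backslash\Gamma$ along the Bruhat cells indexed by $W$ (using $G_\R=\bigcup_{w}BwU_w$ from the Iwasawa remark, with $U_w=U\cap w^{-1}B^0_- w$ finite-dimensional of dimension $\ell(w)$), the integration over $(\Gamma\cap U)\backslash U$ collapses each cell to a finite-dimensional archimedean integral. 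Since $\Phi_\lambda(g)=a(g)^{\lambda+\rho}$ factors through the root subgroups, each cell integral is a product of rank-one integrals, yielding the formal identity
\[
E^\sharp_\lambda(a)\ =\ \sum_{w\in W}\mathbf{c}_w(\lambda)\,a^{w\lambda+\rho},\qquad \mathbf{c}_w(\lambda)=\prod_{\alpha>0,\,w^{-1}\alpha<0}c_\alpha(\lambda),
\]
where $c_\alpha(\lambda)=\xi(\langle\lambda,\alpha^\vee\rangle)/\xi(\langle\lambda,\alpha^\vee\rangle+1)$ is the rank-one zeta ratio with $\xi$ the completed Riemann zeta. Because $|\Phi_\lambda|=\Phi_{\operatorname{Re}\lambda}$ is positive, absolute convergence of $E^\sharp_\lambda$ is equivalent to absolute convergence of this $W$-sum, so the whole problem becomes estimating $\sum_{w}|\mathbf{c}_w(\lambda)|\,a^{\operatorname{Re}(w\lambda)}$.

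Next I would bound the zeta factors. The hypothesis $\operatorname{Re}(\lambda-\rho)\in\mathcal{C}^*$ gives $\operatorname{Re}\langle\lambda,\alpha^\vee\rangle>\langle\rho,\alpha^\vee\rangle\ge1$ for every positive real root $\alpha$, so each rank-one integral converges and, by elementary bounds on $\zeta$ and the Gamma factors, $|c_\alpha(\lambda)|$ is bounded by a uniform constant $\kappa$ and in fact tends to $0$ as the height of $\alpha$ grows. Hence $|\mathbf{c}_w(\lambda)|\le\kappa^{\ell(w)}$, and the product over the inversion set is strongly damped for long $w$. This reduces the target to showing $\sum_{w\in W}\kappa^{\ell(w)}\,a^{\operatorname{Re}(w\lambda)}<\infty$ for $\log a$ in the interior of the Tits cone.

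The final and decisive step is Looijenga's lemma, which is exactly the analytic input controlling such Weyl-orbit exponential sums over the infinite group $W$: for $\operatorname{Re}(\lambda)$ strictly dominant and $\log a$ in the interior $\mathfrak{C}$ of the Tits cone, the exponents $\operatorname{Re}(w\lambda)$ paired against $\log a$ decay fast enough along $W$ that $\sum_{w}a^{\operatorname{Re}(w\lambda)}$ converges, uniformly for $\log a$ in compacta of $\mathfrak{C}$. The strict dominance coming from $\operatorname{Re}(\lambda-\rho)\in\mathcal{C}^*$ provides a margin that I would use to absorb the geometric factor $\kappa^{\ell(w)}$ (writing it as an exponential in $\ell(w)$ and comparing $\ell(w)$ with $\langle\rho-w\rho,\log a\rangle$, or simply shifting $\operatorname{Re}(\lambda)$ slightly within $\mathcal{C}^*$), so that Looijenga's bound still applies to the damped sum and yields the claimed uniform convergence on $UA_\mathfrak{C}K$.

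The step I expect to be the main obstacle is precisely the control of the infinite Weyl sum: reconciling the exponential growth of the number of $w\in W$ of a given length with the decay of $a^{\operatorname{Re}(w\lambda)}$ in the Tits-cone interior, which is the entire force of Looijenga's lemma and the reason convergence holds only for $a\in A_\mathfrak{C}$ rather than all of $A^+$. A secondary, more bookkeeping-type difficulty is justifying the termwise unfolding rigorously in the completed, infinite-dimensional group $G_\R$ --- the interchange of the $W$-sum with the integral over $(\Gamma\cap U)\backslash U$ and the convergence of each finite-dimensional Gindikin--Karpelevich integral --- which is where the projective-limit probability measure $du$ on $(\Gamma\cap U)\backslash U$ and the finite-dimensionality of each $U_w$ are used.
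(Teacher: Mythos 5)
The paper does not prove this theorem itself: it quotes it from \cite{CGLLM}, noting only that the proof there rests on a crucial lemma of Looijenga together with elementary estimates on the Riemann $\zeta$-function. Your outline --- unfolding the constant term into the Gindikin--Karpelevich sum $\sum_{w\in W}\mathbf{c}_w(\lambda)\,a^{w\lambda+\rho}$, bounding the $\xi$-ratios, and invoking Looijenga's lemma to control $\sum_{w}a^{\operatorname{Re}(w\lambda)}$ uniformly on compacta of $A_{\mathfrak{C}}$ --- is exactly that approach.
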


Consequently, a simple application of  Tonelli's theorem as in \cite[\S9]{G04} yields the following result.

\begin{corollary}\cite{CGLLM}\label{cor}
For $\lambda\in\mathfrak{h}^*_\mathbb{C}$ with $\operatorname{Re}(\lambda -\rho) \in \mathcal{C}^*$ and any  compact subset $\mathfrak{S}$ of $A_\mathfrak{C}$, there exists a measure zero subset $S_0$ of $(\Gamma\cap U)\backslash U\mathfrak{S}$ such that the series $E_\lambda(g)$ converges absolutely for $g\in (\Gamma\cap U)\backslash U\mathfrak{S}K- S_0K$.
\end{corollary}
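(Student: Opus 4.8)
The plan is to deduce pointwise (almost-everywhere) absolute convergence of the series $E_\lambda$ from the convergence of its constant term $E^\sharp_\lambda$ supplied by Theorem~\ref{3.3}, by applying Tonelli's theorem on the product space $\big((\Gamma\cap U)\backslash U\big)\times\mathfrak{S}$, which is identified with $(\Gamma\cap U)\backslash U\mathfrak{S}$ through the uniqueness of the Iwasawa decomposition.

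First I would reduce to the case of a real spectral parameter. Since $a(g)\in A^+\cong(\R_{>0})^r$ and $\rho$ is real, one has $\lvert\Phi_\lambda(\gamma g)\rvert=\lvert a(\gamma g)^{\lambda+\rho}\rvert=a(\gamma g)^{\operatorname{Re}(\lambda)+\rho}=\Phi_{\lambda_0}(\gamma g)$, where $\lambda_0:=\operatorname{Re}(\lambda)$. Hence the absolute convergence of $E_\lambda(g)$ is equivalent to the convergence of the nonnegative series $E_{\lambda_0}(g)=\sum_{\gamma}\Phi_{\lambda_0}(\gamma g)$. As $\operatorname{Re}(\lambda-\rho)\in\mathcal{C}^*$ forces $\lambda_0-\rho\in\mathcal{C}^*$, Theorem~\ref{3.3} applies to $\lambda_0$: the constant term $E^\sharp_{\lambda_0}(a)=\int_{(\Gamma\cap U)\backslash U}E_{\lambda_0}(ua)\,du$ converges uniformly for $a$ in the compact set $\mathfrak{S}$, and is therefore bounded on $\mathfrak{S}$.

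Next I would integrate over $\mathfrak{S}$ and interchange summation with integration. Equipping $(\Gamma\cap U)\backslash U$ with its right $U$-invariant probability measure $du$ and $\mathfrak{S}$ with the restriction of the Haar measure $da$ on $A^+$, the function $(u,a)\mapsto E_{\lambda_0}(ua)$ is nonnegative and measurable, so Tonelli's theorem yields
\[
\int_{\mathfrak{S}}\int_{(\Gamma\cap U)\backslash U} E_{\lambda_0}(ua)\,du\,da \;=\; \int_{\mathfrak{S}} E^\sharp_{\lambda_0}(a)\,da \;<\;\infty,
\]
the finiteness holding because $E^\sharp_{\lambda_0}$ is bounded on the compact, hence finite-measure, set $\mathfrak{S}$. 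As the double integral is finite with nonnegative integrand, $E_{\lambda_0}(ua)<\infty$ for $(du\,da)$-almost every $(u,a)$; let $S_0\subset(\Gamma\cap U)\backslash U\mathfrak{S}$ be the resulting measure-zero exceptional set.

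Finally I would propagate convergence in the $K$-direction: since $a(gk)=a(g)$ for all $k\in K$, each term $\Phi_{\lambda_0}(\gamma g)$ is right $K$-invariant, so convergence of $E_{\lambda_0}$ at a point $ua$ with $(u,a)\notin S_0$ forces convergence at every $uak$. Thus $E_\lambda(g)$ converges absolutely for all $g\in(\Gamma\cap U)\backslash U\mathfrak{S}K-S_0K$, which is the claim. The argument is essentially bookkeeping once Theorem~\ref{3.3} is available; the only points demanding care are the correct setup of the product measure and the passage from uniform convergence on $\mathfrak{S}$ to a \emph{bounded} (hence integrable) constant term there, which is exactly what makes Tonelli applicable and guarantees that the exceptional set has measure zero.
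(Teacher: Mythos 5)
Your argument is correct and is exactly the paper's intended proof: the paper disposes of this corollary with the single remark that it follows from Theorem~\ref{3.3} by ``a simple application of Tonelli's theorem as in \cite[\S9]{G04},'' which is precisely the reduction to the real parameter $\operatorname{Re}(\lambda)$, the integration of the nonnegative series over $\big((\Gamma\cap U)\backslash U\big)\times\mathfrak{S}$ against $du\,da$, and the right $K$-invariance of $a(\cdot)$ that you spell out. No gaps; you have simply made explicit the bookkeeping the paper leaves to the reader.
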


Moreover, the everywhere convergence of $E_\lambda(g)$ over $\Gamma UA_\frak{C} K$ was established under the following combinatorial property, by exploring the idea that the Borel Eisenstein series can be nearly bounded by its constant term. 
We set
\begin{equation}\label{Phiwdef}
  \Phi_w \ \defeq \ \Phi_+\cap w^{-1}\Phi_{-}, \quad w\in W.
\end{equation}

The following property was introduced in \cite{CGLLM} and will be used in Theorem~\ref{hol-2}.
\begin{property}\label{conj}
Every nontrivial $w \in W$ can be written as
$w=vw_{\beta}$, where $\beta$ is a positive simple root, $\ell(v)<\ell(w)$, and $\alpha -\beta$ is never a real root for any $\alpha \in \Phi_v$.
\end{property}

In \cite{CGLLM} it is shown that Property \ref{conj} holds when the Cartan matrix ${\sf A}= (a_{ij})$ is symmetric with $|a_{ij}| \ge 2$ for all $i,j$; for rank 2 hyperbolic groups, the condition $|a_{ij}| \ge 2$ is sufficient and the matrix $A$ does not need to be symmetric. The main result  of {\it loc. cit.} is the following.

\begin{theorem}\cite{CGLLM} \label{thm-CGL+}
Assume  that $\lambda\in\mathfrak{h}^*_\mathbb{C}$ satisfies $\operatorname{Re}(\lambda -\rho) \in \mathcal{C}^*$, and that Property~\ref{conj} holds. Then the Kac--Moody Eisenstein series $E_\lambda(g)$ converges absolutely for $g\in \Gamma UA_\mathfrak{C}K$.
\end{theorem}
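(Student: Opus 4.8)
The plan is to upgrade the almost-everywhere convergence of Corollary~\ref{cor} to everywhere convergence by dominating the full series pointwise by its constant term, whose convergence is already uniform on compacta by Theorem~\ref{3.3}. Since $E_\lambda$ is left $\Gamma$-invariant, I may take $g\in UA_\mathfrak{C}K$; and since $\Phi_\lambda$ is right $K$-invariant and depends only on $a(g)$, it suffices to bound $\sum_{\gamma\in(\Gamma\cap B)\backslash\Gamma}|\Phi_\lambda(\gamma u a)|$ for \emph{every} $u\in U$ and $a$ in a compact subset of $A_\mathfrak{C}$. First I would stratify the coset space by Bruhat cells: via the decomposition of $G^0$ into double cosets $B^0 w B^0$, each nontrivial class meets a unique cell indexed by $w\in W$, and the representatives in that cell are parametrized by the integral points of the finite-dimensional unipotent group $U_w=U\cap w^{-1}B^0_- w$, whose Lie algebra is spanned by the root spaces for $\alpha\in\Phi_w$. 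This organizes the sum as $E_\lambda(g)=\sum_{w\in W}\sum_{\gamma\in(\text{cell }w)}\Phi_\lambda(\gamma g)$.

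The next step is to compare, for each fixed $w$, the lattice sum over $U_w$ against the integral $\int_{U_w}\Phi_\lambda(w\,u'\,g)\,du'$. This integral is precisely the $w$-summand in the Gindikin--Karpelevich expansion of the constant term: its $A^+$-dependence is governed by $\operatorname{Re}\bigl(w(\lambda+\rho)\bigr)$, up to a ratio of completed zeta values indexed by $\alpha\in\Phi_w$. The hypothesis $\operatorname{Re}(\lambda-\rho)\in\mathcal{C}^*$ guarantees that each such integral converges, and, crucially, that the sum of these terms over all $w\in W$ converges, since that is exactly the content of Theorem~\ref{3.3}. Thus if the lattice sum in each cell can be majorized by its integral uniformly in $u$ and summably in $w$, absolute convergence over $\Gamma U A_\mathfrak{C}K$ follows.

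The crux, and the place where Property~\ref{conj} enters, is establishing this pointwise majorization, which must hold for every $u\in U$ and not merely on average as in the constant-term computation. I would argue by induction on $\ell(w)$. Writing $w=vw_\beta$ as in Property~\ref{conj}, with $\beta$ a positive simple root and $\ell(v)<\ell(w)=\ell(v)+1$, one has the clean splitting $\Phi_w=\{\beta\}\sqcup w_\beta\Phi_v$. The hypothesis that $\alpha-\beta$ is never a real root for $\alpha\in\Phi_v$ forces $\langle\alpha,\beta^\vee\rangle\le 0$ there (otherwise the $\beta$-string through $\alpha$ would descend to a root), so that $w_\beta\alpha=\alpha-\langle\alpha,\beta^\vee\rangle\beta$ has a \emph{nonnegative} $\beta$-coefficient. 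Hence $w_\beta\Phi_v$ sits above $\Phi_v$ purely in the $\beta$-direction, and $U_w$ is obtained from $U_v$ by adjoining the $\beta$-root subgroup in a cone-preserving way, with no commutators dragging the roots outside $\Phi_w$. This lets me peel off a single rank-one ($SL_2$-type) sum in the $\beta$-direction, bound it by the elementary one-variable integral, and apply the inductive hypothesis to the $v$-part; iterating down to $w=e$ reduces the whole estimate to a product of rank-one bounds matched against the convergent constant-term expansion.

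The main obstacle I anticipate is precisely this passage from the averaged to the pointwise bound over the infinite Weyl group: the unipotent integration in a Bruhat cell does not reproduce the constant term termwise, and in general the conjugation $u'\mapsto w u' w^{-1}$ can generate root directions outside $\Phi_w$ that both break the product factorization of $U_w$ and destroy summability in $w$. Property~\ref{conj}, through the clause that $\alpha-\beta$ is not a real root, is exactly what rules out these extra directions and makes the inductive factorization---and hence the termwise domination of $E_\lambda$ by the everywhere-convergent constant term---go through.
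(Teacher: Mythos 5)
This theorem is not proved in the paper at all: it is imported verbatim from \cite{CGLLM}, and the only hint given here is the one-line remark that under Property~\ref{conj} the Borel Eisenstein series ``can be nearly bounded by its constant term.'' Your overall strategy --- stratify by Bruhat cells, compare the lattice sum over the integral points of the finite-dimensional group $U_w$ with the corresponding Gindikin--Karpelevich integral, and run an induction on $\ell(w)$ driven by the decomposition $w=vw_\beta$ supplied by Property~\ref{conj} --- is the right one and matches the cited source as described. However, two points in your sketch are genuine gaps rather than omitted routine details. First, your deduction that Property~\ref{conj} forces $\langle\alpha,\beta^\vee\rangle\le 0$ for $\alpha\in\Phi_v$ does not follow from the string argument alone: if $\langle\alpha,\beta^\vee\rangle>0$ the $\beta$-string only shows that $\alpha-\beta$ is a \emph{root}, whereas Property~\ref{conj} merely forbids it from being a \emph{real} root, and in an infinite-dimensional algebra the difference of two real roots can perfectly well be imaginary (for a symmetric Cartan matrix with all real roots of norm $2$, $\langle\alpha,\beta^\vee\rangle\ge 2$ gives $(\alpha-\beta|\alpha-\beta)\le 0$). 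The implication can be repaired, but only by using the extra structure: since $\beta$ is simple and $\alpha$ is positive, $\alpha-\beta$ would have to be a positive root; since $v\alpha<0$ and $v\beta>0$, $v(\alpha-\beta)$ is negative, which is impossible for a positive imaginary root because $W$ preserves the set of positive imaginary roots; hence $\alpha-\beta$ would be a positive real root, contradicting Property~\ref{conj}. Without this step your sign claim is unsupported.

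Second, and more seriously, the decisive inductive majorization is asserted rather than established. (a) A lattice sum is not bounded by its integral; one gets the integral plus maximum/endpoint terms, and those extra terms carry the exponent $\lambda+\rho$ rather than $w_\beta(\lambda+\rho)$, so they spawn a second family of contributions whose summability over the infinite Weyl group must itself be proved --- this is precisely why the series is only ``nearly'' bounded by its constant term, and it changes the bookkeeping of the induction. (b) After the rank-one Iwasawa decomposition $w_\beta u_\beta(n)g = u_n a_n k_n$, the inner $v$-cell sum is evaluated at a point whose $U$-component $u_n$ varies with $n$. The whole reason Corollary~\ref{cor} only gives almost-everywhere convergence is that cell sums are \emph{not} uniformly controlled in the $U$-variable, so to close the induction you must show the inductive bound is insensitive to $u_n$; this is where the full strength of ``$\alpha-\beta$ is never a real root'' (controlling the commutators of the $\beta$-root subgroup with $U_{w_\beta\Phi_v}$), and not merely the weaker consequence $\langle\alpha,\beta^\vee\rangle\le 0$, has to be brought to bear. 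As written, your argument identifies the obstacle correctly in the final paragraph but does not actually overcome it, so the proposal is a plausible reconstruction of the strategy of \cite{CGLLM} rather than a proof.
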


\section{Cuspidal Kac--Moody Eisenstein series}\label{KMEseries}

\subsection{Definition of Eisenstein series} For any subset $\theta \subset I$, denote by $W_\theta$ the subgroup of $W$ generated by the reflections $w_i$ for $i \in \theta$. Assume that we choose  $\theta$ such that $W_\theta$ is a finite group.
Define the parabolic subgroup of $G_{\mathbb R}$ associated to $\theta$ by
\[ P_\theta \ \defeq \  B W_\theta B\, , \]
where each $w\in W_\theta$ is identified with one of its representatives in $G_{\mathbb R}$ as in Section~\ref{Zform}. For the rest of this section, we will fix $\theta$ and   write $P=P_\theta$.

Let $N$ be the pro-unipotent radical of $P$, and  $M$  be  the subgroup of $P$ generated by $A$ and 
 $u_{\pm \alpha_i}(t)$, $i\in \theta$, $t\in \mathbb{R}$. Then $M$ is a finite-dimensional  real reductive group. By \cite[6.1.13]{Ku}, we have the Levi decomposition $P=M\ltimes N$.

We shall introduce certain subgroups and explain some  structures of $M$.  
Let $L$ be the subgroup of $M$ generated by $u_{\pm\alpha_i}(t)$, $i \in \theta$, $t\in \mathbb{R}$. Then $L$ is a finite dimensional semisimple group. Let $A_1 \defeq A\cap L$ be the subgroup of $A$ generated by $h_{\alpha_i}(t)$, $t\in \mathbb{R}^\times$, $i \in \theta$, which is a  maximal split torus of $L$. Set 
\[
A_1^+ \ = \ \langle h_{\alpha_i}(t): t\in \mathbb{R}_{>0}, i \in \theta\rangle \ \cong  \ A_1/(A_1\cap K).
\] 
We also introduce
$$H \defeq  \{a\in A: a^{\alpha_i}=\pm 1, i \in \theta\},$$
and let $H^+$ be the identity component of $H$, so that $H^+\cong H/(H\cap K)$.
 
 \begin{lemma} \label{lem-a}
We have $A\cong A_1^+\times H$ and $A^+\cong A_1^+\times H^+$.
\end{lemma}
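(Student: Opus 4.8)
The plan is to reduce everything to linear algebra on the Cartan matrix after coordinatizing $A$. Recall from \eqref{halphaialphajchar} that $h_{\alpha_i}(t)^{\alpha_j}=t^{\langle\alpha_j,\alpha_i^\vee\rangle}$, and more generally $h_{\alpha_i}(t)^{\mu}=t^{\langle\mu,\alpha_i^\vee\rangle}$ for any $\mu\in\mathfrak h_{\mathbb C}^*$. Since $\langle\varpi_j,\alpha_i^\vee\rangle=\delta_{ij}$, the map $(x_1,\dots,x_r)\mapsto h_{\alpha_1}(x_1)\cdots h_{\alpha_r}(x_r)$ from $(\mathbb R^\times)^r$ to $A$ is an isomorphism with inverse $a\mapsto(a^{\varpi_1},\dots,a^{\varpi_r})$; this is the real analogue of the isomorphism $(\mathbb R_{>0})^r\cong A^+$ recorded earlier, to which it restricts. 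In these coordinates $A_1^+$ is the connected coordinate subgroup $\{(x_i): x_i>0,\ x_i=1\text{ for }i\notin\theta\}$, while $H$ is cut out by $|a^{\alpha_i}|=1$ for $i\in\theta$, because $a^{\alpha_i}=\pm1$ is equivalent to $|a^{\alpha_i}|=1$. Writing $\alpha_k=\sum_i\langle\alpha_k,\alpha_i^\vee\rangle\varpi_i$ (legitimate since the $\varpi_i$ form a basis of $\mathfrak h_{\mathbb C}^*$), every constraint becomes a linear equation in the $\log|a^{\varpi_i}|$.

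First I would establish the internal direct product $A=A_1^+\times H$. As $A$ is abelian it suffices to check $A_1^+\cap H=\{1\}$ and $A_1^+\cdot H=A$. For the intersection, an element $a=\prod_{i\in\theta}h_{\alpha_i}(t_i)$ with $t_i>0$ satisfies $a^{\alpha_k}=\prod_{i\in\theta}t_i^{\langle\alpha_k,\alpha_i^\vee\rangle}>0$ for $k\in\theta$, so $a\in H$ forces $a^{\alpha_k}=1$; taking logarithms gives $\sum_{i\in\theta}\langle\alpha_k,\alpha_i^\vee\rangle\log t_i=0$ for all $k\in\theta$. The coefficient matrix $(\langle\alpha_k,\alpha_i^\vee\rangle)_{i,k\in\theta}=(a_{ik})_{i,k\in\theta}$ is the Cartan matrix of the finite-dimensional semisimple group $L$, hence nonsingular, so all $t_i=1$ and $a=1$. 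For surjectivity, given $a\in A$ I would solve for $a_1=\prod_{i\in\theta}h_{\alpha_i}(t_i)\in A_1^+$ with $a_1^{\alpha_k}=|a^{\alpha_k}|$ for $k\in\theta$; this is the linear system $\sum_{i\in\theta}\langle\alpha_k,\alpha_i^\vee\rangle\log t_i=\log|a^{\alpha_k}|$, uniquely solvable by the same nonsingularity, and then $a a_1^{-1}\in H$ by construction.

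Next I would deduce $A^+=A_1^+\times H^+$ by restricting this decomposition. Since $A_1^+\subseteq A^+$, for $a\in A^+$ the factorization $a=a_1h$ from the previous step has $h=a_1^{-1}a\in A^+$ and $h\in H$, hence $h\in H\cap A^+$. The crucial point is that $H\cap A^+=H^+$: on $A^+$ all $\varpi$-coordinates are positive, so the defining condition $a^{\alpha_i}=\pm1$ collapses to $a^{\alpha_i}=1$ for $i\in\theta$, which cuts out a connected subtorus (the exponential image of a linear subspace of $\mathfrak h$), and this connected group is precisely the identity component $H^+$. Combined with $A_1^+\cap H^+\subseteq A_1^+\cap H=\{1\}$ and $A_1^+\cdot H^+=A^+$, this yields the second isomorphism.

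The only real content lies in the bookkeeping with connected components and signs, namely the identification $H\cap A^+=H^+$ and the verification that the ``$\pm1$'' in the definition of $H$ does not obstruct the direct product. Both hinge on the invertibility of the principal submatrix $(a_{ij})_{i,j\in\theta}$, which holds because it is a Cartan matrix of finite type (as $W_\theta$, equivalently $L$, is finite); once this is granted the remainder is routine linear algebra.
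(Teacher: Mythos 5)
Your argument is essentially the paper's proof, written out in coordinates: both hinge on the nondegeneracy of the pairing between $\bigoplus_{i\in\theta}\mathbb{Z}\alpha_i$ and $\bigoplus_{i\in\theta}\mathbb{Z}\alpha_i^\vee$ (equivalently, the invertibility of the finite-type Cartan submatrix $(a_{ij})_{i,j\in\theta}$) to get $A_1^+\cap H=\{1\}$ and to produce $a_1\in A_1^+$ with $a_1^{\alpha_i}=|a^{\alpha_i}|$, and both obtain the second isomorphism by passing to identity components. The proposal is correct; it just spells out the linear-algebra bookkeeping that the paper leaves implicit.
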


\begin{proof}
The second isomorphism follows from taking the identity component of the first. Let us prove the first one. By the assumption on $\theta$,  
the natural pairing between $\bigoplus_{i\in \theta}\mathbb{Z}\alpha_i$ and $\bigoplus_{i\in \theta}\mathbb{Z}\alpha_i^\vee$ is non-degenerate. From this it follows easily that
$A_1^+\cap H=\{1\}$. For the same reason, for any $a\in A$, there exists $a_1\in A_1^+$ such that $a_1^{\alpha_i}=|a^{\alpha_i}|$ for all $i\in \theta$. Then clearly 
$aa_1^{-1}\in H$, which proves that $A=A_1^+H$ and therefore $A\cong A_1^+\times H$. 
\end{proof}

\begin{remark} \label{rm-levi}
By Lemma \ref{lem-a} it is clear that $M=LH$, and we can say a bit more about this decomposition. Note that $H^+$ is the identity component of the center of $M$.
By  \cite[4.3.1]{Bo}, we have
\[
M\cong {}^\circ M\times H^+,
\]
where
\[
{}^\circ M:=\bigcap_{\chi\in X(M)}\textrm{ker }|\chi|.
\]
Here $X(M)$ denotes the lattice of algebraic characters of $M$ defined over $\mathbb{R}$.
Since $L$ is semisimple, we have $L\subset {}^\circ M$ hence $L\cap H\subset {}^\circ M\cap H$, which is a finite group contained in $K$.
\end{remark}

It will be convenient to rewrite $a(g)$ as  
\[
\textrm{Iw}_{A^+}: G_{\R}\to A^+,\quad g\mapsto a(g)
\]
 from the Iwasawa decomposition $G_{\R}=UA^+K$  in  \eqref{iwasawa}.
The parabolic subgroup $P$ gives rise to the decomposition $G_{\R}=NMK$, which unlike the Iwasawa decomposition \eqref{iwasawa} is not unique since $M\cap K$ is nontrivial.  However in Lemma \ref{welldef} below we will show that the projection
\[
\textrm{Iw}_M: G_{\R}\to M/(M\cap K)
\]
is well-defined. 
Although $M=LH$ is in general not a direct product (cf. Remark \ref{rm-levi}),  we will also deduce from the fact $L\cap H\subset K$ that the maps
\[
\textrm{Iw}_L: G_{\R}\to L/(L\cap K) \quad \text{ and } \quad \textrm{Iw}_{H^+}: G_{\R}\to H^+,
\]
which are induced by the projections onto the  factors  $L$ and $H$ of $M$, respectively,
are well-defined.  
Similarly, using the Iwasawa decomposition for $L$, we may define the map
\[
\textrm{Iw}_{A_1^+}: L/(L\cap K)\to A_1^+.
\]

\begin{lemma} \label{welldef}
The Iwasawa maps $\mathrm{Iw}_M$, $\mathrm{Iw}_L$ and $\mathrm{Iw}_{H^+}$ are well-defined.
\end{lemma}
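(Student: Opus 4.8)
The plan is to establish that each of the three Iwasawa maps is independent of the choice of decomposition $G_\R = NMK$, by reducing the well-definedness to the uniqueness in the genuine Iwasawa decomposition $G_\R = UA^+K$ of Theorem~\ref{Iwasawa}. The source of ambiguity is precisely that $M \cap K$ is nontrivial, so $g = nmk$ determines $m$ only up to right multiplication by $M \cap K$; therefore I must check that the target spaces $M/(M\cap K)$, $L/(L\cap K)$, and $H^+$ are exactly the quotients on which the residual $M\cap K$-ambiguity is killed or made harmless. For $\mathrm{Iw}_M$ the statement is almost tautological once uniqueness of $m$ modulo $M\cap K$ is shown: if $n_1 m_1 k_1 = n_2 m_2 k_2$, I want $m_1^{-1} m_2 \in M \cap K$, which is the only thing that needs genuine argument.

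\textbf{First} I would pin down the relation between the parabolic decomposition and the Iwasawa decomposition. Writing the Iwasawa decomposition $m = u_m \, a_m \, k_m$ inside the finite-dimensional reductive group $M$ (using $M = M \cap U \cdot A^+ \cdot (M\cap K)$, compatible with the ambient one since $N \subset U$ and $A^+ \subset M$), I would show that if $g = nmk$ then the ambient Iwasawa factors are $u(g) = n\, u_m$, $a(g) = a_m$, and $k(g) = k_m k$. The key point is that $N M \cap K$-data assemble into $U A^+ K$-data because $N \subset U$, $A^+_M \subset A^+$, and the $M\cap K$ factor lands in $K$. Now suppose $n_1 m_1 k_1 = n_2 m_2 k_2$. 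Applying the \emph{unique} ambient Iwasawa decomposition to both sides and comparing the $A^+$-components forces $\mathrm{Iw}_{A_1^+}(m_1) = \mathrm{Iw}_{A_1^+}(m_2)$ and $\mathrm{Iw}_{H^+}(m_1) = \mathrm{Iw}_{H^+}(m_2)$ via Lemma~\ref{lem-a}, while comparing $U$- and $K$-components shows $m_1^{-1} m_2 \in M \cap K$. This last containment is exactly what makes $\mathrm{Iw}_M$ well-defined into $M/(M \cap K)$.

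\textbf{For $\mathrm{Iw}_L$ and $\mathrm{Iw}_{H^+}$} I would then invoke the near-product structure from Remark~\ref{rm-levi}: $M = LH$ with $L \cap H \subset M \cap K$, so that modulo $M\cap K$ the decomposition $m \equiv \ell h$ into its $L$- and $H$-parts is unique, and the projections to $L/(L\cap K)$ and to $H^+$ are individually well-defined. Concretely, once $m_1^{-1} m_2 \in M \cap K$ is known, I would write $m_i = \ell_i h_i$ and use $L \cap H \subset K$ together with $M \cong {}^\circ M \times H^+$ to conclude $\ell_1^{-1}\ell_2 \in L \cap K$ and $h_1 = h_2$ in $H^+$; the fact that $H^+$ is the identity component of the center and meets $K$ only in a finite subgroup makes the $H^+$-component genuinely unambiguous (no quotient needed), which matches the stated target of $\mathrm{Iw}_{H^+}$.

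\textbf{The main obstacle} I anticipate is the compatibility claim in the first step — that the Iwasawa decomposition performed \emph{internally} in the finite-dimensional group $M$ glues correctly with the ambient infinite-dimensional Iwasawa decomposition of $G_\R$, i.e.\ that $a(nmk) = \mathrm{Iw}_{A^+}^M(m)$ with no leakage between the $N$, $M$, $K$ factors and the pro-unipotent $U$, the full $A^+$, and the big $K$. This requires knowing $N \subset U$, that the $A^+$ of $M$ sits inside the ambient $A^+$ (clear since both are generated by the relevant $h_{\alpha_i}(t)$, $t>0$), and that $M \cap U$ and $M \cap K$ are the Iwasawa pieces of $M$ — facts that should follow from the definitions of $U$, $A^+$, $K$ in Subsection~\ref{Zform} and the finite-dimensionality of $M$, but which must be stated carefully because $M\cap K \neq \{1\}$ is the very reason uniqueness fails at the level of $M$ itself.
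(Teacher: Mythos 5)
Your proposal is correct and follows essentially the same route as the paper: both reduce the well-definedness of $\mathrm{Iw}_M$ to the uniqueness of the ambient Iwasawa decomposition $G_\R = UA^+K$ by writing the $M$-factor in its internal Iwasawa form and using $U = U_M\ltimes N$ to force the $N$-component to be trivial, and both then obtain $\mathrm{Iw}_L$ and $\mathrm{Iw}_{H^+}$ from $M=LH$ together with $L\cap H\subset K$. The compatibility issue you flag as the main obstacle is precisely the step the paper resolves by comparing $U$-components and invoking the semidirect product structure $U=U_M\ltimes N$.
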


\begin{proof}
Well-definedness of $\textrm{Iw}_M$: Assume that $nmk=m'$, where $n\in N$, $m, m'\in M$ and $k\in K$. We need to show that $m'\in mK$, or that $n=1$. Apply the Iwasawa 
decomposition of $M$ to write 
$m=u_Ma_Mk_M$ and $m'=u_M'a_M'k_M'$, where $u_M, u_M'\in U_M:=U\cap M$, $a_M, a_M'\in A^+$ and $k_M, k_M'\in K\cap M$. By the uniqueness of the Iwasawa decomposition 
$G=UA^+K$, we have in particular that
\[
n u_M=u_M',
\]
which forces that $n=1$, $u_M=u_M'$ because $U=U_M\ltimes N$. Hence $m'=mk$, which implies that $\textrm{Iw}_M$ is well-defined.

Well-definedness of  $\textrm{Iw}_L$ and $\textrm{Iw}_{H^+}$: In Remark \ref{rm-levi} we have shown that $M=LH$ and $L\cap H\subset K$. Since we already 
proved that $\textrm{Iw}_M$ is well-defined, we now only need to consider 
\[
l h =l' h'\in M,
\]
where $l, l'\in L$ and $h, h'\in H$. Then $l^{-1}l'=h(h')^{-1}=(h')^{-1}h\in L\cap H\subset K$, hence $l'\in l K$, $h\in h' K$, which prove that
$\textrm{Iw}_L$ and $\textrm{Iw}_{H^+}$ are both well-defined.
\end{proof}

We have the following decomposition of the Iwasawa map $\textrm{Iw}_{A^+}$.

\begin{lemma} We have
\begin{equation}\label{iwa-3}
\mathrm{Iw}_{A^+}=(\mathrm{Iw}_{A_1^+}\circ\mathrm{Iw}_L)\times \mathrm{Iw}_{H^+}: G_{\R} \longrightarrow A^+\cong A_1^+\times H^+.
\end{equation}
\end{lemma}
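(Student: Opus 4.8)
The plan is to verify the factorization \eqref{iwa-3} by tracking a group element through the Iwasawa decomposition and comparing the $A^+$-components produced on each side. First I would take an arbitrary $g \in G_\R$ and write its Iwasawa decomposition $g = u\,a\,k$ with $u \in U$, $a = \mathrm{Iw}_{A^+}(g) \in A^+$, $k \in K$. The goal is to show that $a$ coincides with the element of $A_1^+ \times H^+$ whose $A_1^+$-component is $\mathrm{Iw}_{A_1^+}(\mathrm{Iw}_L(g))$ and whose $H^+$-component is $\mathrm{Iw}_{H^+}(g)$, under the isomorphism $A^+ \cong A_1^+ \times H^+$ of Lemma~\ref{lem-a}.

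To do this I would factor $g$ instead through the parabolic decomposition $G_\R = NMK$, writing $g = n\,m\,k_0$ with $n \in N$, $m \in M$, $k_0 \in K$. By Lemma~\ref{welldef} the class $m(M\cap K) = \mathrm{Iw}_M(g)$ is well-defined, and the induced projections give $\mathrm{Iw}_L(g) = l\,(L\cap K)$ and $\mathrm{Iw}_{H^+}(g) = h^+$ where $m = l\,h$ in the decomposition $M = LH$ of Remark~\ref{rm-levi}. Next I would apply the Iwasawa decomposition internally to $L$: write $l = u_L\,a_L\,k_L$ with $a_L = \mathrm{Iw}_{A_1^+}(\mathrm{Iw}_L(g)) \in A_1^+$. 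Substituting back, $g = n\,u_L\,a_L\,k_L\,h\,k_0$. Since $H^+$ is central in $M$ (Remark~\ref{rm-levi}) and normalizes $N$, and since $a_L$ commutes appropriately, I can collect the $A^+$-factors: the unipotent parts $n$ and $u_L$ absorb into the $U$-component, $k_L$ and $k_0$ into the $K$-component, and what survives in $A^+$ is exactly the product $a_L \cdot h^+ \in A_1^+ \times H^+$. By uniqueness of the Iwasawa decomposition $G_\R = UA^+K$ (Theorem~\ref{Iwasawa}), this identifies $\mathrm{Iw}_{A^+}(g)$ with $(a_L, h^+)$, which is precisely \eqref{iwa-3}.

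The main obstacle I expect is the careful bookkeeping of the $A_1 \cap K$ and $H \cap K$ ambiguities: the maps $\mathrm{Iw}_L$ and $\mathrm{Iw}_{A_1^+}$ land in quotients by compact subgroups, so I must check that the composite $\mathrm{Iw}_{A_1^+} \circ \mathrm{Iw}_L$ is genuinely well-defined into $A_1^+$ (not merely into $A_1/(A_1\cap K)$), and that passing $h$ to its identity-component representative $h^+ \in H^+$ does not introduce a spurious factor. Here the key inputs are $L \cap H \subset K$ (so the $LH$-ambiguity is compact) and the internal Iwasawa decomposition of the finite-dimensional reductive $M$; both ensure all discrepancies lie in $K$ and are killed by uniqueness. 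The remaining steps — that $H^+$ centralizes $M$ and normalizes $N$, and that $A_1^+$ and $H^+$ together span $A^+$ as in Lemma~\ref{lem-a} — are routine structural facts I would invoke rather than re-derive.
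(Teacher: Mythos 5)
Your proposal is correct and follows essentially the same route as the paper's proof: decompose $g=nmk$ with $m=lh$, apply the Iwasawa decomposition internally to $L$, use that $H^+$ commutes with $L$ to collect the factors into the form $U\cdot A^+\cdot K$, and conclude by uniqueness of the Iwasawa decomposition of $G_\R$. The bookkeeping concerns you raise (the $L\cap H\subset K$ ambiguity and the split $h=h^+k_H$) are exactly the points the paper handles via Lemma~\ref{welldef} and Remark~\ref{rm-levi}, so no further work is needed.
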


\begin{proof}
We can verify \eqref{iwa-3} by tracking the definitions.  Assume that
\[
g=nmk,\quad m=lh,
\]
where each factor belongs to the subgroup denoted by the corresponding capital letter. Applying the Iwasawa decomposition of $L$ to write
\[
l=u a_1 k_L,
\]
where $u\in U_M$, $a_1\in A_1^+$ and $k_L\in K\cap L$. We can also write $h=h^+ k_H$, where $h^+\in H^+$ and $k_H\in K\cap H$. Since $H^+$ commutes
with $L$, we have
\[
g=(n u)(a_1 h^+)(k_L k_H k),
\]
which is the Iwasawa decomposition according to $G_\mathbb{R}=UA^+K$. By the definitions of various Iwasawa maps, we have
\begin{align*}
\textrm{Iw}_{A^+}& =a_1h^+ \\
& =\textrm{Iw}_{A_1^+}(l K_L)\cdot \textrm{Iw}_{H^+}(g) \\ 
& = \textrm{Iw}_{A_1^+}(\textrm{Iw}_L(g)) \cdot \textrm{Iw}_{H^+}(g),
\end{align*}
which proves \eqref{iwa-3}.
\end{proof}

Consider $\lambda_P = \sum_{i \not\in \theta} s_i \varpi_i \in \mathfrak h_{\mathbb C}^*$, $s_i \in \C$, where $\varpi_i$ are the fundamental weights.
As before, let $\rho\in \mathfrak{h}_\C^*$ be the Weyl vector, i.e.,  $\langle\rho,\alpha_i^\vee\rangle=1$, $i\in I$.
Likewise, let   $\rho_M$ be half the  sum of the positive roots of $M$, and set $\rho_P \defeq \rho-\rho_M$.
Since $\lambda_P$ is clearly trivial on $A_1=A\cap L$, the decomposition (\ref{iwa-3}) implies that
\begin{equation}\label{extra}
\textrm{Iw}_{A^+}(\cdot)^{\lambda_P}=\textrm{Iw}_{H^+}(\cdot)^{\lambda_P}.
\end{equation}
Similarly, every root $\beta$ of $L$ is trivial on $H^+$, hence
\begin{equation}\label{levi}
\textrm{Iw}_{A^+}(\cdot)^\beta=\left ( \textrm{Iw}_{A_1^+}\circ\textrm{Iw}_L(\cdot) \right )^\beta.
\end{equation}

For any $\lambda_P= \sum_{i \not\in \theta} s_i \varpi_i \in \mathfrak h_\C^*$, define the auxiliary Eisenstein series
\begin{equation}\label{auxEis}
E(\lambda_P,g)\defeq \sum_{\gamma\in (\Gamma\cap P)\backslash\Gamma}\textrm{Iw}_{H^+}(\gamma g)^{\lambda_P+\rho_P}, \quad g \in G_\R.
\end{equation}
More generally, for a bounded function $f$ defined on $M/M\cap K$, define the Eisenstein series $E_f(\lambda_P,g)$ by
\begin{equation} \label{def-cusp-E}
E_f(\lambda_P,g)\defeq \sum_{\gamma\in (\Gamma\cap P)\backslash\Gamma}\textrm{Iw}_{H^+}(\gamma g)^{\lambda_P+\rho_P}\, f\left(\textrm{Iw}_M(\gamma g)\right), \quad g \in G_\R.
\end{equation}
Then (\ref{auxEis}) corresponds to the special case that $f\equiv 1$.

\subsection{Convergence by the reduction mechanism}\label{reduction} The idea of a reduction mechanism is to bound general Eisenstein series by Borel ones. It was first suggested by Bernstein to Borel, and used by Garland in \cite{G11} for the affine case, who was inspired by some results in \cite{GMRV}.
Following this idea, we prove the absolute convergence of parabolic Eisenstein series $E_f(\lambda_P,g)$ in the Godement range. Similar proofs for the finite-dimensional case can be found in \cite[Proposition 12.6]{Bo} and
\cite[Proposition II.1.5]{MW}, which also use Borel Eisenstein series to bound parabolic ones, but the arguments are slightly different.

\begin{proposition} \label{cusp}
Let $f$ be a bounded function on $M/M\cap K$. For $\lambda_P = \sum_{i \not\in \theta} s_i \varpi_i \in \mathfrak h_\C^*$, with $\operatorname{Re}(s_i)>\langle \rho_P, \alpha_i^\vee\rangle$ , $i \not\in \theta$, and any  compact subset $\mathfrak{S}$ of $A_\mathfrak{C}$, there exists a measure zero subset $S_0$ of $(\Gamma\cap U)\backslash U\mathfrak{S}$ such that the series $E_f(\lambda_P, g)$ converges absolutely for $g\in (\Gamma\cap U)\backslash U\mathfrak{S}K- S_0K$.
Moreover, $S_0$ can be chosen to work uniformly for all $s_i$, $i\not\in\theta$, in right half-spaces
of the form $\operatorname{Re}(s_i)>\sigma_i$, where $\sigma_i>\langle \rho_P, \alpha_i^\vee\rangle$ are fixed. 
\end{proposition}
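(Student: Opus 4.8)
The plan is to bound the parabolic Eisenstein series $E_f(\lambda_P,g)$ by a Borel Eisenstein series $E_\lambda(g)$ and then invoke Corollary \ref{cor}. Since $f$ is bounded, say $|f|\leq C$, the summand of $E_f(\lambda_P,g)$ is dominated in absolute value by $C\cdot |\mathrm{Iw}_{H^+}(\gamma g)^{\lambda_P+\rho_P}|$, so it suffices to prove convergence of the auxiliary series $E(\lambda_P,g)$ from \eqref{auxEis}. The difference between $E(\lambda_P,g)$ and a genuine Borel series is that the parabolic series is summed over the larger coset space $(\Gamma\cap P)\backslash\Gamma$ rather than $(\Gamma\cap B)\backslash\Gamma$, and its summand only sees the $H^+$-component of the Iwasawa projection. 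First I would reorganize the sum over $(\Gamma\cap B)\backslash\Gamma$ by fibering it through $(\Gamma\cap P)\backslash\Gamma$, writing each $\gamma\in\Gamma$ via a representative in $(\Gamma\cap P)\backslash\Gamma$ followed by a representative in $(\Gamma\cap B)\backslash(\Gamma\cap P)$; the latter indexing set lives inside the \emph{finite-dimensional} reductive group $M$, where classical finiteness and positivity of root-sum inner products are available.

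The key computation is to compare the exponents. I would choose $\lambda\in\mathfrak h_\C^*$ so that $\lambda+\rho$ restricts correctly: on $H^+$ it should match $\lambda_P+\rho_P$, while on the $A_1^+$-directions coming from $L$ it should be large enough to absorb the inner sum over $(\Gamma\cap B)\backslash(\Gamma\cap P)$. Concretely, using the decomposition \eqref{iwa-3} of $\mathrm{Iw}_{A^+}$ into its $A_1^+$ and $H^+$ parts together with \eqref{extra} and \eqref{levi}, the Borel summand factors as $a(\gamma g)^{\lambda+\rho}=(\text{an }L\text{-part})\cdot(\text{an }H^+\text{-part})$, and summing the $L$-part over the inner coset space yields, up to a convergent constant depending on $g$ through a compact set, a finite multiple of the $H^+$-part $\mathrm{Iw}_{H^+}(\gamma g)^{\lambda_P+\rho_P}$. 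The hypothesis $\operatorname{Re}(s_i)>\langle\rho_P,\alpha_i^\vee\rangle$ for $i\not\in\theta$ is exactly what places the corresponding $\lambda$ in the Godement range $\operatorname{Re}(\lambda-\rho)\in\mathcal C^*$ after shifting by $\rho_M$; I would verify this by pairing against the simple coroots $\alpha_i^\vee$ and using $\rho=\rho_M+\rho_P$ together with $\langle\rho_M,\alpha_i^\vee\rangle=1$ for $i\in\theta$ to handle the $\theta$-directions and the stated inequality to handle the complementary ones.

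With the exponent matching in place, the final step is to apply Corollary \ref{cor} to the dominating Borel series $E_\lambda(g)$: for a fixed compact $\mathfrak S\subset A_\mathfrak C$ it furnishes a measure zero set $S_0\subset(\Gamma\cap U)\backslash U\mathfrak S$ off which $E_\lambda(g)$ converges absolutely on $(\Gamma\cap U)\backslash U\mathfrak S K-S_0 K$, and absolute convergence of the dominating series forces absolute convergence of $E_f(\lambda_P,g)$ on the same set. The uniformity claim follows because the construction of $\lambda$ from the $\sigma_i$ stays within a fixed Godement-range region when $\operatorname{Re}(s_i)>\sigma_i>\langle\rho_P,\alpha_i^\vee\rangle$; since Corollary \ref{cor}'s exceptional set depends only on the compact set $\mathfrak S$ and on $\operatorname{Re}(\lambda-\rho)$ through a fixed right half-space condition, one $S_0$ works for all such $s_i$ simultaneously.

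I expect the main obstacle to be the bookkeeping in the inner sum over $(\Gamma\cap B)\backslash(\Gamma\cap P)$: one must argue that summing the $L$-contribution of the Borel summand over this coset space is genuinely finite and uniformly bounded over $g$ in the relevant compact-times-$U$ region, which requires controlling how the $A_1^+$-component $\mathrm{Iw}_{A_1^+}(\mathrm{Iw}_L(\gamma g))$ varies and invoking the semisimplicity of $L$ so that the Weyl vector $\rho_M$ supplies strict positivity in the $L$-directions. The subtlety is that $M=LH$ is not a direct product, so the decoupling of the $L$- and $H^+$-exponents relies essentially on the facts established earlier that $L\cap H\subset K$ and that $H^+$ commutes with $L$, which guarantee the Iwasawa maps $\mathrm{Iw}_L$ and $\mathrm{Iw}_{H^+}$ are well-defined and multiplicative in the sense of \eqref{iwa-3}.
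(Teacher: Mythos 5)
Your overall strategy is the same as the paper's: use boundedness of $f$ to reduce to the auxiliary series $E(\lambda_P,g)$, unfold the Borel series $E_\lambda(g)$ with $\lambda=(1+\varepsilon)\rho_M+\lambda_P$ through the fibration of $(\Gamma\cap B)\backslash\Gamma$ over $(\Gamma\cap P)\backslash\Gamma$, check $\lambda-\rho\in\mathcal C^*$ from $\operatorname{Re}(s_i)>\langle\rho_P,\alpha_i^\vee\rangle$, and invoke Corollary \ref{cor}. After the unfolding (which requires Lemma \ref{lem3.6} to identify the inner index set with $(\Gamma_M\cap B_M)\backslash\Gamma_M$) and the splitting \eqref{iwa-3}, one gets
\[
E_\lambda(g)\;=\;\sum_{\gamma\in(\Gamma\cap P)\backslash\Gamma}\mathrm{Iw}_{H^+}(\gamma g)^{\lambda_P+\rho_P}\,E^M_{(1+\varepsilon)\rho_M}\bigl(\mathrm{Iw}_M(\gamma g)\bigr),
\]
where the inner factor is a Borel Eisenstein series on the finite-dimensional group $M$ in the Godement range.

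The gap is in how you treat this inner factor. To conclude that $E(\lambda_P,g)=\sum_\gamma\mathrm{Iw}_{H^+}(\gamma g)^{\lambda_P+\rho_P}$ converges because $E_\lambda(g)$ does, you need $E(\lambda_P,g)\le\kappa^{-1}E_\lambda(g)$, i.e.\ a uniform positive \emph{lower} bound $E^M_{(1+\varepsilon)\rho_M}(m)\ge\kappa>0$ valid for \emph{all} $m\in M$. Your proposal instead identifies the key obstacle as showing the inner sum is ``genuinely finite and uniformly bounded'' --- an \emph{upper} bound --- which only gives $E_\lambda(g)\le C\,E(\lambda_P,g)$, the useless direction of comparison. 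You also locate the needed uniformity ``over $g$ in the relevant compact-times-$U$ region,'' but the points $\mathrm{Iw}_M(\gamma g)$ do not remain in a compact subset of $M$ as $\gamma$ ranges over $(\Gamma\cap P)\backslash\Gamma$: the lower bound must hold on the full non-compact quotient $\Gamma_M\backslash M$. This is precisely the step the paper flags as not readily available in the literature and proves by classical reduction theory: after first reducing to real $s_i$ (so the series has positive terms and the single-term bound applies), one writes $M=\Gamma_M\cdot\omega A_tK_M$ for a Siegel domain and bounds one term $\Phi_{(1+\varepsilon)\rho_M}$ from below by $t^{(2+\varepsilon)C_M}h_\omega$ there. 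Without this lower bound the domination argument does not close.
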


\begin{proof}
Since the function $f$ is assumed to be bounded, it suffices to prove the almost everywhere absolute convergence of the auxiliary Eisenstein series $E(\lambda_P,g)$, and we may assume that $s_i>\langle\rho_P, \alpha_i^\vee\rangle$ are real numbers for $i \not\in \theta$. Let $\varepsilon>0$ be sufficiently small such that
\[
s_i>(1+\varepsilon)\langle\rho_P,\alpha_i^\vee\rangle
\]
for $i\not\in\theta$.
We put 
\[
\lambda=(1+\varepsilon)\rho_M +\lambda_P.
\]
Then $\lambda-\rho\in\mathcal{C}^*$, where
$\mathcal C^*$ is defined in \eqref{C-star}.
Indeed, $\langle\lambda,\alpha_i^\vee\rangle=1+\varepsilon$ for $i\in\theta$, and
\[
\langle\lambda, \alpha_i^\vee\rangle=(1+\varepsilon)\langle\rho_M,\alpha_i^\vee\rangle+s_i>  (1+\varepsilon)\langle\rho_M+\rho_P, \alpha_i^\vee\rangle=1+\varepsilon
\]
for $i\not\in\theta$.

Consider the Borel Eisenstein series
\[
E_\lambda(g)=\sum_{\gamma\in (\Gamma\cap B)\backslash \Gamma}\Phi_\lambda(\gamma g)=\sum_{\gamma\in (\Gamma\cap P)\backslash \Gamma}\sum_{\delta\in (\Gamma\cap B)\backslash \Gamma\cap P}\Phi_\lambda(\delta\gamma g).
\]
By the Iwasawa decomposition $G_\R=NMK$ and Lemma \ref{lem3.6} below, the last inner sum is equal to 
\[
\sum_{\delta\in (\Gamma_M\cap B_M)\backslash \Gamma_M}\Phi_\lambda(\delta \cdot \textrm{Iw}_M(\gamma g)),
\]
where $\Gamma_M:=\Gamma\cap M$ and $B_M:=B\cap M$. By our choice of $\lambda$, it can be further manipulated to give
\begin{align*}
&\textrm{Iw}_{H^+}(\gamma g)^{\lambda_P+\rho_P} \sum_{\delta\in (\Gamma_M\cap B_M)\backslash \Gamma_M}\Phi_\lambda(\delta\cdot\textrm{Iw}_{L}(\gamma g))\\
=& \textrm{Iw}_{H^+}(\gamma g)^{\lambda_P+\rho_P} \sum_{\delta\in (\Gamma_M\cap B_M)\backslash \Gamma_M}\textrm{Iw}_{A_1^+}(\delta\cdot\textrm{Iw}_{L}(\gamma g))^{\lambda+\rho}.
\end{align*}

Since $\lambda$ and $\rho$ restrict to $(1+\varepsilon)\rho_M$ and $\rho_M$ on $A_1^+$ respectively,  the last sum is a Borel Eisenstein series  on $M$ with spectral parameter $(1+\varepsilon)\rho_M$, which is in the Godement range. Since the spectral parameter is real, the series is a sum of positive terms, and  hence is never zero. Moreover, it has a positive lower bound. This fact should be well-known,  but it does not seem to be widely available in the literature. We supply a proof below for completeness.

 Recall that a Siegel domain for $M$ is of the form
\[
\frak{S}_{t, \omega}:=\omega A_t K_M,
\]
where $\omega$ is a compact subset of $B_M$, $K_M=K\cap M$ and 
\[
A_t:=\{a\in A^+: a^{\alpha_i}>t, i \in \theta\}
\]
for $t>0$. 
By classical reduction theory (see e.g. \cite[I.2.1]{MW}), we have
$
M=\Gamma_M \cdot \frak{S}_{t, \omega}
$
for $\omega$ sufficiently large and $t$ sufficiently small.  Write $m\in M$ accordingly as
\[
m=\gamma x a k,
\]
where $\gamma\in \Gamma_M$, $x \in \omega$, $a\in A_t$ and $k\in K_M$. The Borel Eisenstein series $E_{(1+\varepsilon)\rho_M}(\cdot)$ on $M$ takes value
\begin{align*}
E_{(1+\varepsilon)\rho_M}(m) & =E_{(1+\varepsilon)\rho_M}(x a)\geq \Phi_{(1+\varepsilon)\rho_M}(x a)=a^{(2+\varepsilon)\rho_M}\Phi_{(1+\varepsilon)\rho_M}(x)\\
& \geq \kappa:=t^{(2+\varepsilon)C_M} \cdot h_\omega>0,
\end{align*}
where $C_M$ is the sum of the coefficients of $\alpha_i$ in $\rho_M$ and $h_\omega$ is the minimum value of  the positive continuous function $\Phi_{(1+\varepsilon)\rho_M}(\cdot)$ on the compact set $\omega$. 

With this lower bound established, it follows that
\[
E(\lambda_P,g)=\sum_{\gamma\in (\Gamma\cap P)\backslash \Gamma}\textrm{Iw}_{H^+}(\gamma g)^{\lambda_P+\rho_P} \leq \kappa^{-1}E_\lambda(g).
\]
The proposition follows from this comparison and Corollary \ref{cor}.
\end{proof}

\begin{lemma} \label{lem3.6}
If $\gamma\in \Gamma\cap P$ decomposes as $\gamma=nm$, where $n\in N$, $m\in M$, then $n, m \in \Gamma$.
\end{lemma}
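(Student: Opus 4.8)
The plan is to introduce a $\Z_{\geq 0}$-grading on the highest weight module $V$ that is preserved by $M$ and strictly lowered by $N$, and then to recover $m$ and $n$ from the graded components of $\gamma$. Let $\Lambda$ be the highest weight of $(\pi,V)$ and write each weight as $\mu=\Lambda-\sum_{i\in I}n_i\alpha_i$ with $n_i\in\Z_{\geq 0}$. Define the degree $\deg(\mu)=\sum_{j\notin\theta}n_j$ and set $V^{(d)}=\bigoplus_{\deg(\mu)=d}V_\mu$, so that $V=\bigoplus_{d\geq 0}V^{(d)}$; equivalently, $\deg(\mu)$ is the value at $\Lambda-\mu$ of the functional on the root lattice sending $\alpha_i\mapsto 0$ for $i\in\theta$ and $\alpha_j\mapsto 1$ for $j\notin\theta$. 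Since the lattice respects the weight-space decomposition, $V_\Z=\bigoplus_\mu(V_\Z\cap V_\mu)$, grouping weights by degree shows that the lattice is homogeneous: $V_\Z=\bigoplus_{d\geq 0}V_\Z^{(d)}$ with $V_\Z^{(d)}\defeq V_\Z\cap V^{(d)}$.

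Next I would record the degree behaviour of the two factors. The group $M$ is generated by $A$ and the $u_{\pm\alpha_i}(t)$ with $i\in\theta$; the torus acts as a scalar on each weight space and $u_{\pm\alpha_i}(t)$ shifts weights by multiples of $\alpha_i$ with $i\in\theta$, so $M$ preserves every $V^{(d)}$. Writing $\Phi_\theta=\Phi\cap(\sum_{i\in\theta}\Z\alpha_i)$, the radical $N$ is the completion of the group generated by $u_\alpha(s)$ for $\alpha\in\Phi_+\setminus\Phi_\theta$; for such $\alpha$ the operator $\pi(e_\alpha)$ raises the weight by $\alpha$ and hence strictly lowers $\deg$, because $\alpha$ has positive coefficient on some $\alpha_j$ with $j\notin\theta$. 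Thus each generator, and therefore every element $n\in N$, acts as $\mathrm{id}+n_-$, where $n_-$ maps $V^{(d)}$ into $\bigoplus_{d'<d}V^{(d')}$.

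With this in place the conclusion follows from a leading-term argument. Fix $v\in V_\Z^{(d)}$. Since $m$ preserves degree, $mv\in V^{(d)}$, and since $n=\mathrm{id}+n_-$ lowers degree, the degree-$d$ component of $\gamma v=n(mv)$ is exactly $mv$. As $\gamma v\in V_\Z$ and $V_\Z$ is homogeneous, this component lies in $V_\Z^{(d)}$; hence $mv\in V_\Z$. Ranging over $d$ gives $mV_\Z\subseteq V_\Z$. Applying the same reasoning to $\gamma^{-1}=m^{-1}n^{-1}\in\Gamma$, whose degree-$d$ component on $V_\Z^{(d)}$ is $m^{-1}v$ (again $n^{-1}\in N$ lowers degree and $m^{-1}$ preserves it), yields $m^{-1}V_\Z\subseteq V_\Z$. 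Therefore $mV_\Z=V_\Z$, i.e. $m\in\Gamma$, and then $n=\gamma m^{-1}\in\Gamma$.

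The point that needs the most care, and the main obstacle, is justifying the degree estimate for a general element of the completed radical $N$ rather than for a finite product of root group elements: one must verify that acting as the identity plus a strictly degree-lowering operator is stable under the products, inverses, and limits used to build the completion, and that it is compatible with the topology defined by the coherently ordered basis $\Psi$. Once this stability is in hand, which ultimately holds because composing operators of the form $\mathrm{id}+(\text{strictly lowering})$ again gives such an operator and the grading is bounded below by $0$, the homogeneity of $V_\Z$ and the leading-term extraction are routine.
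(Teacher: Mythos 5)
Your proof is correct and is essentially the paper's argument: both extract the leading term of $\gamma v$ via an invariant of weights that $M$ preserves and $N$ strictly changes (you use the degree $\sum_{j\notin\theta}n_j$; the paper uses the weight class modulo the root lattice of $M$), combined with the weight-homogeneity of $V_\Z$. Your explicit application of the same argument to $\gamma^{-1}$ to obtain $m^{-1}V_\Z\subseteq V_\Z$ is a small but welcome refinement that the paper's contradiction argument leaves implicit.
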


\begin{proof}
Recall from \cite[\S5]{CG} that a coherently ordered basis $\Psi$ of $V_\mathbb{Z}$ consists of weight vectors. If $m \not\in \Gamma$, then there exists $v_0\in \Psi$ such that $m\cdot v_0\not\in V_\mathbb{Z}$. Assume that $v_0$ is of weight $\mu$. Then there exists a finite subset $\Psi_0\subset \Psi$ such that each $v\in \Psi_0$ is of weight $\mu+\alpha$ for some $\alpha$ in the root lattice of $M$, and that $m\cdot v_0$ is of the form
\[
m \cdot v_0=   \sum_{v \in \Psi_0} c_v   v \qquad\text{ with }  c_v \not \in \mathbb Z \text{ for some } v \in \Psi_0. 
\] 
It is clear that  $nm\cdot v_0$ is of the form
\[
nm\cdot v_0 = m \cdot v_0 +  \sum_{v \in \Psi \setminus \Psi_0} c_v v  \not \in V_\mathbb Z,
\]
which contradicts that $\gamma=nm \in \Gamma$. 
\end{proof}

\section{Holomorphy of cuspidal Eisenstein series} \label{section-hol}

In this section, we prove our main result. We keep the assumption that $\mathfrak g$ is infinite-dimensional with nonsingular Cartan matrix $\sf A$. Let $P=P_\theta$ be a maximal parabolic subgroup of $G_\R$ with  fixed Levi decomposition and associated finite-dimensional split  reductive Levi subgroup $M$ as in Section \ref{KMEseries}. We set $\{i_P\}:=I\setminus\theta$  and recall that $\alpha_P=\alpha_{i_P}$, and $\varpi_P$ is the fundamental weight for $\alpha_P$. Let $W^\theta$ be the set of minimal length representatives of $W_\theta\backslash W$, i.e.
\[
W^\theta \defeq \{w\in W: w^{-1}\alpha_i>0, i\in\theta\}.
\]

\begin{definition}\label{ample}
A maximal parabolic subgroup $P=P_\theta$ whose Levi subgroup $M$ is a finite-dimensional split reductive group, is said to satisfy {\it Property RD} if there exists a constant $D>0$, such that for every nontrivial element $w\in W^\theta$ we have
$\langle D\varpi_P+\rho_M, \alpha^\vee\rangle \leq 0$ for any  $\alpha\in\Phi_{w^{-1}}$.
\end{definition}

\begin{remark} \label{rmk-sh}
Note that $\langle \varpi_P, \alpha^\vee\rangle>0$ for any $\alpha\in \Phi_{w^{-1}}$ with $w\in W^\theta$. Hence if there exists $D$ that satisfies the condition in Definition 
\ref{ample}, then any positive constant smaller than $D$ will also satisfy the condition. Actually, one can see that $P_\theta$ has Property RD if and only if
for every nontrivial element $w\in W^\theta$ we have
$\langle \rho_M, \alpha^\vee\rangle < 0$ for any  $\alpha\in\Phi_{w^{-1}}$.
\end{remark}

The fact $L\cap H\subset K$ allows us to view a function $f$ on $L/L\cap K$ as defined on $M/M\cap K$. In particular an unramified cusp form $f$ on $L$ can be regarded as a bounded function on $M/M\cap K$, hence one may define the Eisenstein series $E_f(\lambda_P,g)$ as in Section \ref{KMEseries}.  In the sequel we shall adopt this interpretation without further remark. Moreover, since $\lambda_P= s \varpi_P$ for some $s \in \C$, we write $E_f(s, g)= E_f(\lambda_P,g)$ in the rest of the paper.

We state the main theorem again.  
\begin{theorem}[Theorem \ref{hol-1}]\label{hol}
Let $f$ be an unramified cusp form on $L\cap\Gamma \backslash L$. If the maximal parabolic subgroup $P$ satisfies Property RD, then for any compact subset $\mathfrak{S}$ of $A_\mathcal{C}$, there exists a measure zero subset $S_0$ of $(\Gamma\cap U)\backslash U\mathfrak{S}$ such that $E_f(s,g)$ is an entire function of $s\in\mathbb{C}$ for $g\in (\Gamma\cap U)\backslash U\mathfrak{S}K- S_0K$.
\end{theorem}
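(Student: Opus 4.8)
The plan is to prove entirety by showing that the series defining $E_f(s,g)$ converges locally uniformly in $s\in\mathbb{C}$ for generic $g$, and then invoke the Weierstrass theorem. Indeed, each summand
$$
T_\gamma(s)=\textrm{Iw}_{H^+}(\gamma g)^{s\varpi_P+\rho_P}\,f\!\left(\textrm{Iw}_M(\gamma g)\right)
=\textrm{Iw}_{H^+}(\gamma g)^{\rho_P}f\!\left(\textrm{Iw}_M(\gamma g)\right)\cdot b_\gamma^{\,s},\qquad b_\gamma=\textrm{Iw}_{H^+}(\gamma g)^{\varpi_P}>0,
$$
is manifestly an entire function of $s$, so a locally uniformly absolutely convergent sum of such terms is entire. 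Thus it suffices to bound $\sum_\gamma\sup_{|s|\le R}|T_\gamma(s)|$ for each fixed $R>0$. Writing $\sigma=\operatorname{Re}(s)$ and using the decomposition \eqref{iwa-3} together with the facts that $\varpi_P$ and $\rho_P$ are both trivial on $A_1$, this reduces to estimating $\sum_{\gamma\in(\Gamma\cap P)\backslash\Gamma} a(\gamma g)^{\sigma\varpi_P+\rho_P}\,|f(\textrm{Iw}_L(\gamma g))|$ uniformly for $\sigma\in[-R,R]$, where $a(\gamma g)=\textrm{Iw}_{A^+}(\gamma g)$.

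Next I would bring in the cusp form's rapid decay. Since $\Gamma_L=\Gamma\cap L\subseteq\Gamma\cap P$ and $f$ is automorphic and left $\Gamma_L$-invariant, each term $T_\gamma$ depends only on the coset, and one may choose the representative $\gamma$ so that $\textrm{Iw}_L(\gamma g)$ lies in a fixed Siegel set of $L$. As $f$ is a cusp form on the finite-dimensional semisimple group $L$, it is rapidly decreasing, so for every $N>0$,
$$
|f(\textrm{Iw}_L(\gamma g))|\ \le\ C_N\prod_{i\in\theta}\bigl(a(\gamma g)^{\alpha_i}\bigr)^{-N},
$$
using $a(\gamma g)^{\alpha_i}=\textrm{Iw}_{A_1^+}(\textrm{Iw}_L(\gamma g))^{\alpha_i}$ for $i\in\theta$. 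This extra factor is the only gain available, and crucially it provides decay precisely in the directions where some $a(\gamma g)^{\alpha_i}\to\infty$.

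The heart of the argument is to organize the sum by the Bruhat cells $G_\R=\bigsqcup_{w\in W^\theta}PwB$, so that $(\Gamma\cap P)\backslash\Gamma$ is split according to $w\in W^\theta$. On the cell of $w$ the Iwasawa exponent of $a(\gamma g)$ is governed, to leading order, by the flipped roots $\Phi_{w^{-1}}$: going deep into the cell forces $a(\gamma g)^{\alpha}$ to grow for $\alpha\in\Phi_{w^{-1}}$, so that both the genuine growth $\sigma\varpi_P$ and the cusp-form decay are registered through the pairings $\langle\varpi_P,\alpha^\vee\rangle$ and $\langle\rho_M,\alpha^\vee\rangle$. By Remark~\ref{rmk-sh}, Property RD gives $\langle\rho_M,\alpha^\vee\rangle<0$ and $\langle\varpi_P,\alpha^\vee\rangle\le -D^{-1}\langle\rho_M,\alpha^\vee\rangle$ on such $\alpha$, whence
$$
\langle\,\sigma\varpi_P+N\rho_M,\ \alpha^\vee\,\rangle\ \le\ \langle\rho_M,\alpha^\vee\rangle\,(N-\sigma/D)\ \le\ 0
\qquad\text{for all }N\ge R/D,\ \sigma\in[-R,R],\ \alpha\in\Phi_{w^{-1}}.
$$
Thus, once $N\ge R/D$, the rapid-decay factor (which, against the flipped coroots, behaves like the $N$-th power of the $\rho_M$-decay) turns each cell's effective exponent into one lying in the convergence range $\mathcal{C}^*$, uniformly for $|s|\le R$. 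The whole sum is then dominated by a convergent Borel Eisenstein series, using Corollary~\ref{cor} together with the positive lower bound on the inner $M$-Borel series established in the proof of Proposition~\ref{cusp}. The uniform-in-$s_i$ clause of Proposition~\ref{cusp} allows a single measure-zero set $S_0$ to work for all $|s|\le R$ and all the finitely many relevant $N$, and exhausting $R\in\mathbb{N}$ produces one $S_0$ valid for all of $\mathbb{C}$.

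The hard part will be making rigorous the link between depth in the $w$-cell and the simultaneous growth of the $a(\gamma g)^{\alpha}$, $\alpha\in\Phi_{w^{-1}}$, that is, converting the cusp form's rapid decay in the $A_1$-directions into quantitative decay measured against the flipped coroots. This is exactly where the strict dominance afforded by $a(g)\in A_\mathcal{C}$ (the \emph{open} fundamental chamber, rather than the larger $A_\mathfrak{C}$) and the Tits-cone/Looijenga estimates from \cite{CGLLM} enter, and where Property RD is the precise hypothesis balancing $\varpi_P$-growth against $\rho_M$-decay. The remaining technical points are the uniform control of the compact (Siegel and completion) factors and verifying that the comparison survives passage to the projective-limit measure on $(\Gamma\cap U)\backslash U$.
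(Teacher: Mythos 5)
Your outline matches the paper's strategy in its large-scale architecture --- Bruhat decomposition over $W^\theta$, rapid decay of $f$ traded against the $\varpi_P$-growth via Property RD, domination by the auxiliary series $E(s_0,\cdot)$ at a fixed real parameter, and a countable union of null sets over an exhaustion of $\mathbb{C}$ --- but the step you defer as ``the hard part'' is precisely the content of the proof, and the heuristic you offer for it is not how the argument closes. The paper does not argue that ``depth in the $w$-cell forces $a(\gamma g)^\alpha$ to grow for $\alpha\in\Phi_{w^{-1}}$.'' It proves the exact pointwise inequality (Lemma~\ref{ine})
\[
\mathrm{Iw}_{A^+}(\gamma g)^{-\rho_M}\ \le\ \mathrm{Iw}_{A^+}(\gamma g)^{D\varpi_P},
\qquad \gamma\in\Gamma\cap BwB,\ g\in UA_\mathcal{C}K,
\]
via the identity $\mathrm{Iw}_{A^+}(\gamma g)=\mathrm{Iw}_{A^+}(g)^w\cdot\mathrm{Iw}_{A^+}(wu_w)$ with $u_w\in U_w$ (Lemma~\ref{iwasawa-1}) and the sign constraint $\log\mathrm{Iw}_{A^+}(wu_w)=\sum_{\alpha\in\Phi_{w^{-1}}}c_\alpha\alpha^\vee$ with $c_\alpha\le0$. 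Pairing the exponent with $D\varpi_P+\rho_M$ splits it into two pieces: the $u_w$-piece is nonnegative exactly by Property RD, and the $\mathrm{Iw}_{A^+}(g)^w$-piece is nonnegative because $w^{-1}(D\varpi_P+\rho_M)$ is a nonnegative combination of simple roots (Lemma~\ref{lem3-1}) while $\log a(g)\in\mathcal{C}$.

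The second piece is where your proposal has a genuine gap: nothing in your argument controls the contribution of $a(g)$ itself after conjugation by $w$, and making it nonnegative rests on the non-obvious fact (Lemma~\ref{lem2-1}) that the coefficient of $\alpha_P$ in the expansion $\varpi_P=\sum_i n_i\alpha_i$ is \emph{negative} --- a consequence of $\mathfrak g$ being infinite-dimensional with nonsingular Cartan matrix --- combined with $w^{-1}\alpha_P<0$ for nontrivial $w\in W^\theta$. Your proposal never surfaces this, and without it the term $\langle w^{-1}(D\varpi_P+\rho_M),\log a(g)\rangle$ has no definite sign, so no choice of $N$ rescues the estimate. Two smaller points: your chain $\langle\sigma\varpi_P+N\rho_M,\alpha^\vee\rangle\le\langle\rho_M,\alpha^\vee\rangle(N-\sigma/D)$ is only valid for $\sigma\ge0$ (harmless, since $\sigma<0$ is the easy case), and before Lemma~\ref{iwasawa-1} can be applied you need the paper's claim that each coset in $(\Gamma\cap P)\backslash(\Gamma\cap PwB)$ admits a representative in $\Gamma\cap BwB$ (proved via $M_{\mathbb Q}=\Gamma_M B_{M,\mathbb Q}$); normalizing representatives so that $\mathrm{Iw}_L(\gamma g)$ lies in a Siegel set of $L$ is a different normalization and does not by itself supply this.
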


The rest of this section is devoted to the proof of Theorem \ref{hol}. We shall follow the strategy of \cite{GMP} and \cite{CLL} with Property RD at hand. We prove an inequality in Lemma~\ref{ine} which is crucial in our proof of the main theorem. This inequality enables us to deduce the entirety of 
Eisenstein series from the rapid decay of cusp forms. Along the way, we note some interesting properties of the infinite-dimensional Kac-Moody algebras that are considered here.

The following lemma is essentially the same as  \cite[Lemma 3.2]{GMP}. Write $x^y=yxy^{-1}$ for $x,y\in G_\R$, and set $U_w = \prod_{\alpha \in \Phi_{w}} U_\alpha$ for $w \in W$ where $U_\alpha$ is the one-parameter subgroup corresponding to $\alpha$.

\begin{lemma}\label{iwasawa-1} Let $w \in W$.
If $\gamma\in \Gamma\cap BwB$, then
\[
\mathrm{Iw}_{A^+}(\gamma g)=\mathrm{Iw}_{A^+}(g)^w\cdot\mathrm{Iw}_{A^+}(wu_w)
\]
for some $u_w\in U_w$ depending on $\gamma$ and $g$.
\end{lemma}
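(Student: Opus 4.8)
The plan is to combine the refined Bruhat decomposition of $\gamma$ with the elementary transformation rules of the Iwasawa projection $\mathrm{Iw}_{A^+}$, which all follow from uniqueness in $G_\R=UA^+K$. Writing $g=u(g)\,\mathrm{Iw}_{A^+}(g)\,k(g)$, these rules are: $\mathrm{Iw}_{A^+}(ug)=\mathrm{Iw}_{A^+}(g)$ for $u\in U$; $\mathrm{Iw}_{A^+}(ag)=\mathrm{Iw}_{A^+}(a)\,\mathrm{Iw}_{A^+}(g)$ for $a\in A$ (here $\mathrm{Iw}_{A^+}(a)\in A^+$ is the positive part of $a$); $\mathrm{Iw}_{A^+}(g\kappa)=\mathrm{Iw}_{A^+}(g)$ for $\kappa\in K$; and $\mathrm{Iw}_{A^+}(g)^w:=w\,\mathrm{Iw}_{A^+}(g)\,w^{-1}\in A^+$, using that the chosen representative of $w$ lies in $K$ and that $W$ preserves $A^+$.

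First I would use $BwB=B\,w\,U_w$ (with $U_w=U\cap w^{-1}B_-^0 w$, as in the Remark following Theorem~\ref{Iwasawa}) to write $\gamma=b\,w\,\eta$ with $b\in B$ and $\eta\in U_w$. Left multiplication then peels off $\mathrm{Iw}_{A^+}(\gamma g)=\mathrm{Iw}_{A^+}(b)\cdot\mathrm{Iw}_{A^+}(w\eta g)$. Next I would transport the torus part of $g$ through $w$: setting $v=\eta\,u(g)\in U$ and conjugating $\mathrm{Iw}_{A^+}(g)$ to the left of $w$ gives $w\,v\,\mathrm{Iw}_{A^+}(g)=\mathrm{Iw}_{A^+}(g)^w\,w\,v'$ with $v'=\mathrm{Iw}_{A^+}(g)^{-1}v\,\mathrm{Iw}_{A^+}(g)\in U$, so that $\mathrm{Iw}_{A^+}(w\eta g)=\mathrm{Iw}_{A^+}(g)^w\cdot\mathrm{Iw}_{A^+}(wv')$. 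Finally, splitting $v'$ according to $U=U_wU^w$, where $U^w=\prod_{\alpha\in\Phi_+,\,w\alpha\in\Phi_+}U_\alpha$, and noting that $w$ conjugates the $U^w$-part back into $U$ (since it sends its roots to $\Phi_+$), left $U$-invariance kills it and leaves an element $\eta'\in U_w$. Assembling these steps yields
\[
\mathrm{Iw}_{A^+}(\gamma g)=\mathrm{Iw}_{A^+}(g)^w\cdot\mathrm{Iw}_{A^+}(b)\cdot\mathrm{Iw}_{A^+}(w\eta'),\qquad \eta'\in U_w .
\]

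The main obstacle is the residual factor $\mathrm{Iw}_{A^+}(b)\cdot\mathrm{Iw}_{A^+}(w\eta')$: it is not yet of the form $\mathrm{Iw}_{A^+}(wu_w)$ because of the torus contribution $\mathrm{Iw}_{A^+}(b)$ coming from the Borel part of the Bruhat decomposition. Here the hypothesis $\gamma\in\Gamma$ is essential and not cosmetic: writing $wu_w=u_-w$ with $u_-\in U_-$, one sees that $\mathrm{Iw}_{A^+}(wU_w)$ is a \emph{bounded} subset of $A^+$ (using Garland's identity $\mathrm{Iw}_{A^+}(h)^{\mu}=\|v_\mu\|/\|\pi(h^{-1})v_\mu\|$ on each extremal weight vector $v_\mu\in V_\Z$, together with the fact that $U$ raises weights, one gets $\mathrm{Iw}_{A^+}(wu_w)^{\mu}\le 1$ for dominant $\mu$), whereas $\mathrm{Iw}_{A^+}(b)$ is unconstrained for a general $b\in B$ — indeed the analogue of the identity already fails for non-integral $\gamma$ in $SL_2$. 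I would therefore invoke $\gamma\cdot V_\Z=V_\Z$: applying the same norm identity to $h=\gamma g$ and using that $\pi(\gamma^{-1})$ maps the extremal vectors of the coherently ordered basis $\Psi$ into the lattice $V_\Z$ bounds $\|\pi(g^{-1}\gamma^{-1})v_\mu\|$ from below, which forces the residual element into the admissible region $\mathrm{Iw}_{A^+}(wU_w)$. One then selects $u_w\in U_w$ realizing it. The technical heart of the argument is exactly this last realizability step — matching the residual simultaneously against all extremal weights $\{w'\Lambda\}_{w'\in W}$ spanning $\mathfrak h^*$ — and it is precisely where integrality, rather than mere membership in $BwB$, is used.
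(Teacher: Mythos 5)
The paper itself offers no proof of this lemma: it is imported wholesale with the remark that it is ``essentially the same as \cite[Lemma 3.2]{GMP}'', so there is no in-paper argument to measure yours against. On its own terms, your reduction is set up correctly: the refined Bruhat decomposition $\gamma=b\,w\,\eta$ with $\eta\in U_w$, the transport of $\mathrm{Iw}_{A^+}(g)$ across $w$, and the elimination of the $U^w$-factor are all sound (modulo writing the decomposition as $U=U^wU_w$ rather than $U_wU^w$, so that the $U^w$-factor sits adjacent to $w$, gets conjugated into $U$, and is absorbed by left $U$-invariance). You have also correctly located the real difficulty, namely the residual torus factor $\mathrm{Iw}_{A^+}(b)$.

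The step that disposes of that factor, however, is asserted rather than proved, and as written it does not close. The norm identity $\mathrm{Iw}_{A^+}(h)^{\varpi_i}=\|v_i\|/\|\pi(h^{-1})v_i\|$ holds only for the \emph{highest} weight vector of each fundamental representation (only that vector is fixed by $U$), not for general extremal vectors $w'\Lambda$; what integrality of $\pi(\gamma^{-1})v_i$ then yields is the inequality $\mathrm{Iw}_{A^+}(b)^{\varpi_i}\le 1$ for all $i$, hence that the residual element satisfies $(\,\cdot\,)^{\varpi_i}\le 1$. This is a \emph{necessary} condition for membership in $\mathrm{Iw}_{A^+}(wU_w)$, but far from sufficient: by \eqref{log} that image lies inside the cone $\{\exp(\sum_{\alpha\in\Phi_{w^{-1}}}c_\alpha\alpha^\vee):c_\alpha\le 0\}$, which for small $\ell(w)$ is a thin (at most $\ell(w)$-dimensional) subset of the $r$-dimensional region $\{a:a^{\varpi_i}\le 1\}$. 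Concretely, for $w=w_j$ a simple reflection in rank $r\ge 2$ one must show $\log\mathrm{Iw}_{A^+}(b)\in\mathbb{R}_{\le 0}\,\alpha_j^\vee$, i.e.\ $\mathrm{Iw}_{A^+}(b)^{\varpi_i}=1$ exactly for $i\ne j$; this requires a two-sided integrality argument (both $\pi(\gamma)v_i$ and $\pi(\gamma^{-1})v_i$ lie in $V_\Z$, and $w_ju_{\alpha_j}(s)$ fixes $v_i$ for $i\ne j$), which your one-sided lower bound does not deliver. Even once the residual is placed in the correct cone, the phrase ``one then selects $u_w\in U_w$ realizing it'' presupposes that $u\mapsto\mathrm{Iw}_{A^+}(wu)$ is surjective onto that cone --- a nontrivial statement you neither formulate nor prove. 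These two points are precisely the content of \cite[Lemma 3.2]{GMP} and its antecedents in \cite{GMS2}; without them your argument is a correct reduction followed by an unproved claim.
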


We also recall that  one has, for $u_w\in U_w$,
\begin{equation}\label{log}
\log \big(\mathrm{Iw}_{A^+}(wu_w)\big)=\sum_{\alpha\in \Phi_{w^{-1}}}c_\alpha \alpha^\vee\quad\textrm{with}\quad c_\alpha\leq 0
\end{equation}
by \cite[Lemma 6.1]{GMS2} or \cite[(3.25)]{GMP}.

To utilize Property RD, we need the property of the coefficient of $\alpha_P$ in the fundamental weight $\varpi_P$,  given by the following lemma.

\begin{lemma}\label{lem2-1}
The coefficient $n_{i_P}$ of $\alpha_P$ in the expansion $\varpi_P=\sum_{i\in I} n_i\alpha_i$ is negative.
\end{lemma}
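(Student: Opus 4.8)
The plan is to understand the fundamental weight $\varpi_P$ in terms of the inverse of the Cartan matrix, and to exploit the positivity/negativity structure that comes from the nonsingularity and the infinite-dimensionality of $\mathfrak g$. Since the simple roots $\{\alpha_i\}_{i\in I}$ form a basis of $\mathfrak h_{\mathbb C}^*$ and the coroots $\{\alpha_i^\vee\}$ form the dual basis relation $\langle\alpha_j,\alpha_i^\vee\rangle = a_{ij}$, writing $\varpi_P=\varpi_{i_P}=\sum_{i\in I} n_i\alpha_i$ and pairing against $\alpha_j^\vee$ gives $\langle\varpi_{i_P},\alpha_j^\vee\rangle=\delta_{i_P,j}$. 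Hence the coefficient vector $(n_i)_{i\in I}$ is exactly the $i_P$-th column of $({\sf A}^{-1})^{\mathsf T}$; more precisely $n_i=({\sf A}^{-1})_{i_P\, i}$ in the appropriate convention. So the claim $n_{i_P}<0$ is the statement that the diagonal entry $({\sf A}^{-1})_{i_P i_P}$ of the inverse Cartan matrix is negative.

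First I would reduce the lemma to this statement about the diagonal entry of ${\sf A}^{-1}$, setting up the pairing computation carefully so that the identification with a matrix entry is unambiguous. The key structural input is that $\mathfrak g$ is a symmetrizable, nonsingular, infinite-dimensional (non-affine) Kac--Moody algebra; such a generalized Cartan matrix is of \emph{indefinite type}. The relevant general fact (Kac, \emph{Infinite-Dimensional Lie Algebras}, Ch.~4, Theorem 4.3 and the surrounding classification of finite/affine/indefinite type) is that for an indecomposable GCM of indefinite type, there is a positive vector $u>0$ with ${\sf A}u<0$ (componentwise), whereas finite type gives ${\sf A}u>0$ and affine type gives ${\sf A}u=0$. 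I would invoke the sign theory for such matrices: for an indefinite-type indecomposable GCM the inverse ${\sf A}^{-1}$ has all entries nonpositive (indeed strictly negative for indecomposable matrices), which immediately yields $({\sf A}^{-1})_{i_P i_P}<0$ and hence $n_{i_P}<0$.

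The key steps, in order, are: (1) express $n_{i_P}=\langle\varpi_P,\cdot\rangle$ as the $(i_P,i_P)$ entry of ${\sf A}^{-1}$ via the defining pairings; (2) recall that infinite-dimensional non-affine plus nonsingular forces the (indecomposable components of the) GCM to be of indefinite type; and (3) apply the sign result for inverses of indefinite-type GCMs to conclude the diagonal entry is negative. If $\mathfrak g$ is not assumed indecomposable, I would first reduce to an indecomposable component: $\varpi_P$ is supported on the component containing $i_P$, and that component must itself be of infinite (hence, by the hypotheses, indefinite) type, so the argument applies there while the other components contribute nothing to $n_{i_P}$.

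The main obstacle will be pinning down the precise sign statement for ${\sf A}^{-1}$ and ensuring the infinite-dimensional non-affine hypothesis genuinely rules out the borderline cases. For finite type the diagonal of ${\sf A}^{-1}$ is positive, and affine type is excluded by nonsingularity of $\sf A$ (the affine Cartan matrix is singular), so the remaining indefinite case is where negativity holds; I expect the subtlety to be confirming that every relevant indecomposable component is of strictly indefinite type and invoking the correct formulation of the sign theorem (e.g.\ via the Perron--Frobenius characterization in \cite{K}, or by directly exhibiting a vector $u>0$ with ${\sf A}u<0$ and deducing the sign of the inverse). Once the classification input is correctly cited, the remainder is a short linear-algebra deduction.
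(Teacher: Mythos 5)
Your opening reduction is correct and is exactly where the paper starts: pairing $\varpi_P=\sum_i n_i\alpha_i$ against the simple coroots shows that $(n_i)_{i\in I}$ is the $i_P$-th column of $\mathsf{A}^{-1}$, so the lemma amounts to $(\mathsf{A}^{-1})_{i_P i_P}<0$. The gap is in the ``sign theory'' you invoke to finish: it is \emph{not} true that an indecomposable, nonsingular, symmetrizable GCM of indefinite type has entrywise nonpositive (let alone strictly negative) inverse. For instance,
\[
\mathsf{A}=\begin{pmatrix} 2&-1&0\\ -1&2&-3\\ 0&-3&2\end{pmatrix}
\]
is symmetric, indecomposable and nonsingular with $\det\mathsf{A}=-12$, hence of indefinite type, yet $(\mathsf{A}^{-1})_{11}=5/12>0$. (The statement you have in mind is closer to a fact about \emph{hyperbolic} matrices, and even there only in the nonstrict form: for the Feingold--Frenkel matrix $\mathcal{F}$ one has $(\mathsf{A}^{-1})_{33}=0$.) The telltale sign is that your argument never uses the hypothesis that the Levi subgroup $M$ is finite-dimensional, i.e.\ that the principal submatrix of $\mathsf{A}$ on $\theta=I\setminus\{i_P\}$ is of finite type. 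In the counterexample above the node $1$ has complement of indefinite type and the corresponding diagonal entry is positive, whereas the node $3$, whose complement is of type $A_2$, does satisfy the conclusion. Also, your fallback of ``exhibiting $u>0$ with $\mathsf{A}u<0$ and deducing the sign of the inverse'' does not work: that only says $\mathsf{A}^{-1}$ sends one particular negative vector to a positive one, which does not control individual entries.

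The finite-type hypothesis on $M$ is precisely what the paper's proof exploits, and it is the ingredient you would need to restore. After symmetrizing, write $\mathsf{DA}$ in block form with $\theta\times\theta$ block $U$, off-diagonal column $\beta$ and corner scalar $c$. Since $\Phi_M$ is of finite type, $U$ is positive definite; since $\mathfrak g$ is infinite-dimensional, $\mathsf{DA}$ is not positive definite; and $\mathsf{DA}$ is congruent to $\operatorname{diag}\bigl(U,\,c-\beta^{T}U^{-1}\beta\bigr)$, so by nonsingularity the Schur complement $c-\beta^{T}U^{-1}\beta$ is strictly negative. Its reciprocal is the $(i_P,i_P)$ entry of $(\mathsf{DA})^{-1}$, which differs from $n_{i_P}$ by the positive symmetrizer entry $d_{i_P}$. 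So the correct replacement for your step (3) is this Schur-complement (signature) computation rather than a global sign theorem for $\mathsf{A}^{-1}$.
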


\begin{proof}
By relabeling the simple roots, we may assume that $\alpha_P=\alpha_r$ and the upper-left $(r-1)\times (r-1)$ corner of $\mathsf{A}$ is the Cartan matrix of the root system $\Phi_M$ of $M$. Let $\mathsf{D}=\textrm{diag}(d_1,\ldots,d_r)$ be a symmetrizer of $\mathsf{A}$ and write
\[
\mathsf{DA}=\begin{pmatrix}
U & \beta \\
\beta^T & c
\end{pmatrix},\quad (\mathsf{DA})^{-1}=\begin{pmatrix} V & \gamma \\ \gamma^T & d\end{pmatrix},
\]
where $U, V$ are $(r-1)\times(r-1)$ matrices and $c, d$ are scalars.
Then $U=U^T$ is positive definite because $\Phi_M$ is of finite type. If $\gamma^T=(\gamma_1,\ldots,\gamma_{r-1})$, then
\[
\varpi_r=\sum^{r-1}_{i=1}d_r \gamma_i\alpha_i + d_r d\alpha_r.
\]
Hence we only need to show that $d<0$. 

The matrix $\mathsf{DA}$ has the same signature as
\[
\begin{pmatrix}
U & 0 \\
0 & c-\beta^TU^{-1}\beta
\end{pmatrix}.
\]
Since we assume that $\mathfrak{g}$ is of infinite type and $\mathsf{A}$ is nonsingular, we must have $c-\beta^TU^{-1}\beta<0$. We have the relations
\[
U\gamma+\beta d=0,\quad \beta^T\gamma+cd=1.
\]
It follows that
\[
\beta^T\gamma+\beta^TU^{-1}\beta d=\beta^TU^{-1}(U\gamma+\beta d)=0,
\]
which implies that $(c-\beta^TU^{-1}\beta)d=1$. Hence $d<0$ as desired.
\end{proof}

As a corollary, we obtain the following result.

\begin{lemma}\label{lem3-1}
For $D>0$ sufficiently small, $w^{-1}(D\varpi_P+\rho_M)$ is a positive linear combination of simple roots for any nontrivial $w\in W^\theta$.
\end{lemma}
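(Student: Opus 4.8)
The plan is to trivialize the weight $D\varpi_P+\rho_M$ into a manageable combination and then track signs coordinate-by-coordinate in the simple-root basis. First I would record the algebraic skeleton. Since $\mathsf A$ is nonsingular, the weight $\rho-\rho_M$ is characterized by $\langle\rho-\rho_M,\alpha_i^\vee\rangle=0$ for all $i\in\theta$, so it is a multiple of $\varpi_P$; writing $\rho-\rho_M=p\,\varpi_P$ one finds $p=1-\langle\rho_M,\alpha_{i_P}^\vee\rangle\ge 1>0$, because $\langle\rho_M,\alpha_{i_P}^\vee\rangle=\sum_{i\in\theta}r_i\,a_{i_P i}\le 0$. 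Consequently, for $0<D<p$,
\[
D\varpi_P+\rho_M=\Big(1-\tfrac Dp\Big)\rho_M+\tfrac Dp\,\rho,\qquad
w^{-1}(D\varpi_P+\rho_M)=\Big(1-\tfrac Dp\Big)w^{-1}\rho_M+\tfrac Dp\,w^{-1}\rho .
\]
This reduces the whole statement to understanding the two vectors $w^{-1}\rho_M$ and $w^{-1}\rho$ in the simple-root basis.

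Next I would extract the sign inputs. For the $\rho_M$-term: since $w\in W^\theta$ sends every positive root of $M$ to a positive root of $\mathfrak g$, writing $\rho_M=\tfrac12\sum_{\beta\in\Phi_M,\ \beta>0}\beta$ shows $w^{-1}\rho_M$ is a nonnegative combination of simple roots, and its $\alpha_P$-coordinate is strictly positive for nontrivial $w$: if it vanished, every $w^{-1}\beta$ would be supported on $\theta$, so $w^{-1}$ would permute the positive roots of $M$, forcing $w\in W_\theta\cap W^\theta=\{1\}$; integrality of $2\rho_M$ even gives a lower bound $\tfrac12$ for that coordinate. For the $\rho$-term I would use $w^{-1}\rho=\rho-\sum_{\alpha\in\Phi_{w^{-1}}}\alpha$ together with Lemma~\ref{lem2-1}: since $n_{i_P}<0$, the $\alpha_P$-coordinate of $\rho=\rho_M+p\varpi_P$ equals $p\,n_{i_P}<0$, which, combined with the fact that every $\alpha\in\Phi_{w^{-1}}$ has $\alpha_P$-coordinate $\ge 1$, pins down exactly where $w^{-1}\rho$ can be negative. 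Coordinate-by-coordinate: when the $\rho_M$-term already contributes positively to $\alpha_j$, any small $D$ keeps the combination positive; in the remaining coordinates one needs $(1-\tfrac Dp)(w^{-1}\rho_M)_j>\tfrac Dp\,|(w^{-1}\rho)_j|$, and Lemma~\ref{lem2-1} is what guarantees the correct sign of the $\varpi_P$-contribution there, so that a small enough $D$ suffices for the given $w$.

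The hard part is making the threshold on $D$ \emph{uniform} over the infinitely many nontrivial $w\in W^\theta$: a single $D_0>0$ must work for all of them at once. Here the naive coordinatewise bound is insufficient, because $w^{-1}$ is an isometry for the (indefinite) invariant form, so $w^{-1}\rho_M$ and $w^{-1}\rho$ keep bounded Lorentzian length while their simple-root coordinates blow up toward the light cone as $\ell(w)\to\infty$; thus $|(w^{-1}\rho)_j|$ is unbounded and one cannot simply take $D$ smaller and smaller. What rescues the argument is that along any sequence these two vectors approach a common limiting ray in the imaginary cone whose coordinates are all positive, so the ratios $(w^{-1}\rho_M)_j/|(w^{-1}\rho)_j|$ converge to strictly positive limits rather than to $0$. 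I would therefore finish by showing that the infimum over $w$ of the per-$w$ thresholds is controlled by these limiting directions and stays strictly positive, dispose of the finitely many short $w$ by hand, and take $D_0$ below this infimum; this uniform limiting estimate is the step I expect to be the main obstacle.
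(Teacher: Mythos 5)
There is a genuine gap, and it is the step you yourself flag as ``the main obstacle'': producing a single $D>0$ that works simultaneously for the infinitely many nontrivial $w\in W^\theta$. That uniformity \emph{is} the content of the lemma (the per-$w$ statement is easy), and your proposal does not establish it. The sketched rescue via limiting rays is not carried out and is dubious as stated: $w^{-1}\rho_M$ is half a sum of positive roots and so has nonnegative, growing coordinates, while $w^{-1}\rho=\rho-\sum_{\alpha\in\Phi_w}\alpha$ (note it is $\Phi_w$, not $\Phi_{w^{-1}}$, in the paper's notation \eqref{Phiwdef}) acquires arbitrarily negative coordinates as $\ell(w)\to\infty$; these two vectors do not approach a common ray, and controlling the ratio of their $j$-th coordinates uniformly from below is essentially equivalent to the lemma you are trying to prove. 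The difficulty is an artifact of your decomposition: splitting $D\varpi_P+\rho_M$ into $\rho_M$ and $\rho$ and estimating the two images of $w^{-1}$ separately destroys exactly the cancellation that makes the statement true.

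The paper's proof avoids all of this by expanding $D\varpi_P+\rho_M$ in the simple-root basis \emph{before} applying $w^{-1}$. By Lemma~\ref{lem2-1} one has $D\varpi_P+\rho_M=\sum_{i\in\theta}m_i\alpha_i+Dn_{i_P}\alpha_P$ with $Dn_{i_P}<0$, and for $D$ small --- a condition depending only on $\rho_M$ and $\varpi_P$, not on $w$ --- each $m_i>0$, since $\rho_M$ has strictly positive coefficients on the $\alpha_i$, $i\in\theta$. For a nontrivial $w\in W^\theta$ one has $w^{-1}\alpha_i>0$ for all $i\in\theta$, and since $P$ is maximal and $w^{-1}$ must send some simple root to a negative root, necessarily $w^{-1}\alpha_P<0$. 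Hence every term $m_i\,w^{-1}\alpha_i$ and $(Dn_{i_P})\,w^{-1}\alpha_P$ is a nonnegative combination of simple roots, and their (nonzero) sum is the desired expression. No limiting analysis or uniform infimum is needed because the choice of $D$ never interacts with $w$; I would encourage you to replace your two-term convex combination with this direct expansion.
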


\begin{proof}
Recall that $\rho_M$ is the half sum of the positive roots of $M$. Write $\varpi_P=\sum_{i\in I}n_i\alpha_i$ as in Lemma \ref{lem2-1}. Then we have 
\[
D\varpi_P+\rho_M=\sum_{i\in\theta} m_i\alpha_i +m_{i_P}\alpha_P,
\]
where 
\[
\sum_{i\in\theta}m_i\alpha_i=\rho_M+ D\sum_{i\in\theta}n_i\alpha_i,
\]
and $m_{i_P}=Dn_{i_P}<0$ by Lemma \ref{lem2-1}. Clearly for $D>0$ sufficiently small we have $m_i>0$ for $i\in \theta$. Consider a nontrivial element $w \in W^\theta$. Since $w^{-1}\alpha_i>0$ for $i\in \theta$, we must have
$w^{-1}\alpha_P<0$. The lemma follows immediately.
\end{proof}

With the above results, we can now state and prove the following inequality.

\begin{lemma}\label{ine} If $P$ satisfies Property RD, then for $D>0$ sufficiently small,
\[
\mathrm{Iw}_{A^+}(\gamma g)^{-\rho_M}\leq \mathrm{Iw}_{A^+}(\gamma g)^{D\varpi_P}
\]
for any $g\in UA_\mathcal{C}K$, $w\in W^\theta$ and $\gamma\in\Gamma\cap BwB$.
\end{lemma}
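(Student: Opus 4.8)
The plan is to prove the inequality $\mathrm{Iw}_{A^+}(\gamma g)^{-\rho_M}\leq \mathrm{Iw}_{A^+}(\gamma g)^{D\varpi_P}$ by taking logarithms and reducing it to a statement about the pairing $\langle D\varpi_P + \rho_M, \cdot\rangle$ evaluated against the logarithm of $\mathrm{Iw}_{A^+}(\gamma g)$. Since the inequality between two quasicharacter-values $a^{\mu} \leq a^{\nu}$ for $a \in A^+$ is equivalent to $\log(a)^{\mu} \leq \log(a)^{\nu}$, which in turn is equivalent to $\langle \nu - \mu, \log(a)\rangle \geq 0$ (using that $a \mapsto a^{\mu}$ is $\exp\langle \mu, \log a\rangle$), the desired inequality becomes $\langle D\varpi_P + \rho_M, \log \mathrm{Iw}_{A^+}(\gamma g)\rangle \geq 0$.

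Let me think about what to do with $\log \mathrm{Iw}_{A^+}(\gamma g)$. Since $\gamma \in \Gamma \cap BwB$ and $g \in UA_\mathcal{C}K$, I would apply Lemma~\ref{iwasawa-1} to write $\mathrm{Iw}_{A^+}(\gamma g) = \mathrm{Iw}_{A^+}(g)^w \cdot \mathrm{Iw}_{A^+}(wu_w)$ for some $u_w \in U_w$. Taking logarithms, this splits the pairing into two terms: a ``geometric'' term $\langle D\varpi_P + \rho_M, w\,\log\mathrm{Iw}_{A^+}(g)\rangle = \langle w^{-1}(D\varpi_P + \rho_M), \log\mathrm{Iw}_{A^+}(g)\rangle$, and a ``correction'' term $\langle D\varpi_P + \rho_M, \log \mathrm{Iw}_{A^+}(wu_w)\rangle$. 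The first term is where Lemma~\ref{lem3-1} enters: for $D$ small, $w^{-1}(D\varpi_P + \rho_M)$ is a nonnegative (indeed positive) linear combination of simple roots, and since $g \in UA_\mathcal{C}K$ means $\log\mathrm{Iw}_{A^+}(g) \in \mathcal{C}$ lies in the open fundamental chamber where all simple roots pair positively, this term is $\geq 0$.

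For the correction term, I would use the description \eqref{log}: $\log\mathrm{Iw}_{A^+}(wu_w) = \sum_{\alpha \in \Phi_{w^{-1}}} c_\alpha \alpha^\vee$ with all $c_\alpha \leq 0$. Pairing against $D\varpi_P + \rho_M$ gives $\sum_{\alpha \in \Phi_{w^{-1}}} c_\alpha \langle D\varpi_P + \rho_M, \alpha^\vee\rangle$. This is precisely where Property RD is used: it guarantees $\langle D\varpi_P + \rho_M, \alpha^\vee\rangle \leq 0$ for every $\alpha \in \Phi_{w^{-1}}$. Since each $c_\alpha \leq 0$ as well, each summand is a product of two nonpositive numbers, hence $\geq 0$, so the correction term is $\geq 0$. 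Adding the two nonnegative contributions yields $\langle D\varpi_P + \rho_M, \log\mathrm{Iw}_{A^+}(\gamma g)\rangle \geq 0$, which is what we want.

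I expect the main subtlety to be bookkeeping around the constant $D$: Property RD supplies \emph{some} $D_0 > 0$ for which the pairing condition holds, while Lemma~\ref{lem3-1} requires $D$ small enough for the positivity of $w^{-1}(D\varpi_P+\rho_M)$. By Remark~\ref{rmk-sh}, shrinking $D$ preserves Property RD, so I would simply take $D$ small enough to satisfy both conditions simultaneously, which is consistent with the ``for $D>0$ sufficiently small'' in the statement. The only other point requiring care is confirming that the two lemmas apply to the same class of $w$ — both Lemma~\ref{lem3-1} and the correction-term estimate range over nontrivial $w \in W^\theta$, and the trivial case $w = 1$ is handled separately since then $\gamma \in B$, $\Phi_{w^{-1}}$ is empty, and $w^{-1}(D\varpi_P+\rho_M) = D\varpi_P + \rho_M$ is manifestly dominant, so both terms are trivially nonnegative.
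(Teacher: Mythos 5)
Your proposal is correct and follows essentially the same route as the paper: reduce to the logarithmic inequality, split $\log\mathrm{Iw}_{A^+}(\gamma g)$ via Lemma~\ref{iwasawa-1} and \eqref{log}, handle the first term with Lemma~\ref{lem3-1} and $\log\mathrm{Iw}_{A^+}(g)\in\mathcal{C}$, the second with Property RD, and reconcile the two constraints on $D$ via Remark~\ref{rmk-sh}. Your explicit treatment of the trivial case $w=\mathrm{id}$ is a small point the paper leaves implicit, but otherwise the arguments coincide.
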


\begin{proof} From Definition \ref{ample}, there exists $D>0$ such that
\begin{equation} \label{DP}  \la D \varpi_P +\rho_M, \alpha^\vee \ra \le 0 \quad \text{for } \alpha \in \Phi_{w^{-1}}.\end{equation} By Remark \ref{rmk-sh}, we may shrink the constant  $D$,  if necessary, and  assume that the property in Lemma \ref{lem3-1} holds with this constant $D$. 
By Lemma \ref{iwasawa-1} and (\ref{log}), in order to prove Lemma~\ref{ine}, it suffices to establish its logarithmic form
\begin{equation}\label{log-iwa}
\langle w^{-1}(D\varpi_P+\rho_M),\,  \log(\textrm{Iw}_{A^+}(g))\rangle+\sum_{\alpha\in \Phi_{w^{-1}}}c_\alpha \langle D\varpi_P+\rho_M, \alpha^\vee\rangle\geq 0,
\end{equation}
with $c_\alpha\leq 0$, $\alpha\in \Phi_{w^{-1}}$. The first term is positive by Lemma \ref{lem3-1} and the assumption that $\log(\textrm{Iw}_{A^+}(g))\in \mathcal{C}$. The summation
in \eqref{log-iwa} is nonnegative by Property RD \eqref{DP}. Hence \eqref{log-iwa} holds.
\end{proof}

{\it Proof of Theorem \ref{hol}.} Fix any real number $s_0>\langle\rho_P,\alpha_P^\vee\rangle$, and assume that $\textrm{Re}(s)<s_0$.
By the rapid decay of cuspidal automorphic forms (e.g. \cite{MiSch}), for any $n>0$ there exists a constant $C_1>0$ depending on $n$ such that
\begin{equation}\label{decay}
|f(g)|\leq C_1 \, \textrm{Iw}_{A_1^+}(g)^{-n\rho_M}.
\end{equation}
We take 
\[
n:=\frac{s_0-\operatorname{Re}(s)}{D}>0,
\]
with $D$  the constant  in Lemma \ref{ine}.

Recall that we have the Bruhat decomposition (\cite{CG})
\[  G_\R = \bigcup_{w \in W^\theta} PwB.\]
 We claim that for any $\gamma\in (\Gamma\cap P)\backslash  ( \Gamma \cap PwB)$ with $w\in W^\theta$,  we can choose a representative of $\gamma$ which lies in $ \Gamma\cap BwB$. Recall the Levi decomposition $P=MN$. By the Iwasawa decomposition over $\mathbb{Q}$, we have a representative  of $\gamma$ of the form
 \[
 \tilde{\gamma}=mn wb\in \Gamma\cap PwB,
 \]
 where $m\in M_\mathbb{Q}$, $n\in N_\mathbb{Q}$ and $b \in B_\mathbb{Q}$. Since
$M$ is finite-dimensional, we have $M_\mathbb{Q}=\Gamma_M B_{M, \mathbb{Q}}$ (see \cite{Go}), where 
$\Gamma_M:=\Gamma\cap M$ and $B_{M,\mathbb{Q}}:=B_\mathbb{Q}\cap M$. Write 
$m=  \gamma_M b_M$ accordingly, where $\gamma_M\in \Gamma_M$ and $b_M\in B_{M, \mathbb{Q}}$. 
Then 
 \[
 \gamma_M^{-1}\tilde{\gamma}=b_Mnwb
 \]
 is a representative of $\gamma$, which clearly lies in $\Gamma\cap BwB$. 

With the claim established, it follows from (\ref{extra}), (\ref{levi}), Lemma \ref{ine} and (\ref{decay}) that
\begin{align*}
 \big| \textrm{Iw}_{H^+}(\gamma g)^{s\varpi_P} f\big(\textrm{Iw}_{M}(\gamma g)\big)\big|\ \
 \le \ \   &C_1  \, \textrm{Iw}_{H^+}(\gamma g)^{(\textrm{Re\,}s)\varpi_P}
 \textrm{Iw}_{A_1^+}\circ\textrm{Iw}_{L}(\gamma g)^{-n\rho_M}\\
 \leq  \ \ &   C_1 \, \textrm{Iw}_{H^+}(\gamma g)^{(\textrm{Re\,}s)\varpi_P}\textrm{Iw}_{H^+}(\gamma g)^{nD\varpi_P}\\
 = \ \ & C_1  \, \textrm{Iw}_{H^+}(\gamma g)^{s_0\varpi_P}.
\end{align*}
Note that the constants $C_1$ and $D$ are independent of $w$ and that the estimation works for any representative $\gamma \in \Gamma \cap BwB$. Multiplying by $\textrm{Iw}_{H^+}(\gamma g)^{\rho_P}$ and taking the summation over $\gamma$, it follows that $E_{f}(s,g)$ is bounded by
$C_1 E(s_0,g)$. Hence by Proposition \ref{cusp} the series $E_f(s,g)$ is absolutely convergent on $U\frak{S}K$ off $S_0K$, with $S_0$ a measure zero subset of $(\Gamma\cap U)\backslash U\mathfrak{S}$. Note that this  subset $S_0$ depends on $s_0$. To conclude, we may take a sequence of real numbers  $s_0^{(i)}>\langle\rho_P,\alpha_P^\vee\rangle$ which goes to infinity, and take the countable union of the corresponding measure zero subsets $S_0^{(i)}$.  \qed

Assuming Property \ref{conj}, we have everywhere convergence of cuspidal Eisenstein series over the full Tits cone by the reduction mechanism.   Thus in this case Proposition \ref{cusp} and Theorem \ref{hol} can be strengthened. We summarize this as the following result.

\begin{theorem} \label{hol-2} Assume Property \ref{conj}, and let $P=MN$ be a maximal parabolic subgroup of $G_\R$ with finite-dimensional Levi subgroup $M$. Then the following hold.

(i) For a bounded function $f$ on $M\cap \Gamma\backslash M /M\cap K$, the Eisenstein series $E_f(s,g)$ converges absolutely for $g\in \Gamma U A_\mathfrak{C} K$ and $\mathrm{Re}(s)>1$.

(ii) If $P$ satisfies Property RD and $f$ is an unramified cusp form on $L\cap\Gamma\backslash L$, then the Eisenstein series $E_f(s,g)$ is an entire function of $s\in\C$ for $g\in \Gamma U A_\mathcal{C} K$.
\end{theorem}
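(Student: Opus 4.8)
The plan is to observe that both parts of Theorem~\ref{hol-2} are obtained by re-running the arguments already carried out for Proposition~\ref{cusp} and Theorem~\ref{hol}, the \emph{only} change being that the almost-everywhere convergence input drawn from Corollary~\ref{cor} is replaced throughout by the \emph{everywhere} convergence of Borel Eisenstein series furnished by Theorem~\ref{thm-CGL+}. This substitution is legitimate precisely because Property~\ref{conj} is now assumed, and it is what removes the measure-zero exceptional set from the conclusions.

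For part (i), I would first reduce to the auxiliary series: since $f$ is bounded, $|E_f(\lambda_P,g)|$ is dominated by a constant multiple of $E(\lambda_P,g)$. I would then set $\lambda=(1+\varepsilon)\rho_M+\lambda_P$ and reproduce verbatim the Levi-unfolding computation and the positive lower bound $\kappa>0$ for the Borel Eisenstein series on $M$ from the proof of Proposition~\ref{cusp}, giving $E(\lambda_P,g)\le \kappa^{-1}E_\lambda(g)$. Where that proof concluded via Corollary~\ref{cor} (hence only off a measure-zero set), I would instead invoke Theorem~\ref{thm-CGL+}: under Property~\ref{conj} the right-hand side $E_\lambda(g)$ converges absolutely for \emph{every} $g\in\Gamma UA_\mathfrak{C}K$, so the same holds for $E_f(\lambda_P,g)$. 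The lower bound on $\mathrm{Re}(s)$ is the Godement condition $\mathrm{Re}(\lambda-\rho)\in\mathcal{C}^*$ specialized to the maximal case $\lambda_P=s\varpi_P$, exactly as in Proposition~\ref{cusp}.

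For part (ii), I would run the proof of Theorem~\ref{hol} unchanged up to the pointwise estimate
\[
\big|\,\textrm{Iw}_{H^+}(\gamma g)^{s\varpi_P+\rho_P}\,f(\textrm{Iw}_M(\gamma g))\,\big|\ \le\ C_1\,\textrm{Iw}_{H^+}(\gamma g)^{s_0\varpi_P+\rho_P},
\]
valid for $g\in UA_\mathcal{C}K$, $w\in W^\theta$, $\gamma\in\Gamma\cap BwB$ and $\mathrm{Re}(s)<s_0$, which combines \eqref{extra}, \eqref{levi}, Lemma~\ref{ine} and the rapid decay \eqref{decay} with $n=(s_0-\mathrm{Re}(s))/D$. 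Summing over $\gamma$ yields $|E_f(s,g)|\le C_1\,E(s_0,g)$. The improvement over Theorem~\ref{hol} is that, by part (i) together with Property~\ref{conj}, $E(s_0,g)$ now converges for every $g\in\Gamma UA_\mathfrak{C}K\supseteq\Gamma UA_\mathcal{C}K$; consequently no measure-zero set is needed and $E_f(s,g)$ converges absolutely everywhere on $\Gamma UA_\mathcal{C}K$ (the restriction to $A_\mathcal{C}$ coming from the hypothesis $\log\textrm{Iw}_{A^+}(g)\in\mathcal{C}$ used in Lemma~\ref{ine}).

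To pass from absolute convergence to entirety in $s$, I would fix $g$ and an arbitrary compact $\mathcal{K}\subset\mathbb{C}$, choose $s_0$ exceeding both $\langle\rho_P,\alpha_P^\vee\rangle$ and $\max_{s\in\mathcal{K}}\mathrm{Re}(s)$, and note that each summand is entire in $s$ while the displayed bound supplies an $s$-independent dominating series $C_1\,E(s_0,g)<\infty$; the Weierstrass M-test then gives locally uniform convergence on $\mathcal{K}$, so the sum is holomorphic there, and since $\mathcal{K}$ is arbitrary $E_f(s,g)$ is entire. The main technical obstacle is exactly this last local-uniformity step: one must check that the rapid-decay constant $C_1=C_1(n)$ can be taken uniform in $s$ over $\mathcal{K}$—equivalently, uniform over the compact range of exponents $n=(s_0-\mathrm{Re}(s))/D$ attained on $\mathcal{K}$—so that the dominating series is genuinely independent of $s$ and the Weierstrass theorem applies.
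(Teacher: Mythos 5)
Your proposal is correct and follows essentially the same route as the paper, whose entire ``proof'' of Theorem~\ref{hol-2} is the one-line observation that under Property~\ref{conj} the everywhere convergence of Theorem~\ref{thm-CGL+} replaces the almost-everywhere input of Corollary~\ref{cor} in Proposition~\ref{cusp} and Theorem~\ref{hol}, thereby eliminating the exceptional set $S_0$. The extra care you take over the locally uniform (in $s$) choice of the rapid-decay constant $C_1(n)$ in the Weierstrass M-test step is a detail the paper leaves implicit, and it is easily supplied since $n=(s_0-\operatorname{Re}(s))/D$ ranges over a compact interval on which one may take the maximum of finitely many such constants.
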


\section{Parabolic subgroups satisfying Property RD} \label{sec-ample}

In this section we investigate a sufficient condition which implies that every maximal parabolic subgroup of $G_\R$ with finite-dimensional Levi subgroup satisfies Property RD, and then discuss some examples, including  the rank 3 Feingold--Frenkel hyperbolic algebra $\mathcal{F}$.

We keep the notations of the previous section. In particular, $P=P_\theta$ stands for a maximal parabolic subgroup of $G_\R$, whose Levi subgroup $M$ is a finite-dimensional split reductive group. 

\begin{proposition}\label{lem4}
For any $w \in W$ and $\alpha \in \Phi_{w^{-1}}$, suppose that we have $\langle\alpha_i, \alpha^\vee\rangle\leq 0$ for any $\alpha_i$ such that $w^{-1}\alpha_i>0$. Then  every maximal parabolic subgroup $P$ with finite dimensional Levi subgroup satisfies Property RD.
\end{proposition}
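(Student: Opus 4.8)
The plan is to produce, for an arbitrary maximal parabolic $P=P_\theta$ with finite-dimensional Levi $M$, a single constant $D>0$ that verifies Property RD simultaneously for all nontrivial $w\in W^\theta$; everything then reduces to one sign computation carried out uniformly in $w$. Writing $I=\theta\sqcup\{i_P\}$, I would first fix the two expansions in the simple-root basis. For $\varpi_P=\sum_{i\in I}n_i\alpha_i$, Lemma~\ref{lem2-1} supplies the crucial input $n_{i_P}<0$. For $\rho_M=\sum_{i\in\theta}c_i\alpha_i$, I would record that $c_i>0$ for every $i\in\theta$, since $\rho_M$ is the Weyl vector of the finite-type root system of $M$ and is therefore a strictly positive combination of the $\theta$-simple roots. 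Because $\theta$ is finite and each $c_i>0$, I can then choose $D>0$ small enough that $Dn_i+c_i\ge 0$ for all $i\in\theta$; the point is that this choice depends only on $\theta$, not on $w$.

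Next I would fix a nontrivial $w\in W^\theta$ and $\alpha\in\Phi_{w^{-1}}$, recalling that $\Phi_{w^{-1}}$ consists of positive real roots, so $\alpha^\vee$ is a genuine coroot with $\langle\alpha,\alpha^\vee\rangle=2$. By the definition of $W^\theta$ we have $w^{-1}\alpha_i>0$ for every $i\in\theta$, so the hypothesis of the proposition applies and gives $\langle\alpha_i,\alpha^\vee\rangle\le 0$ for all $i\in\theta$. Expanding by linearity and separating the $\theta$-indices from $i_P$, I would write
\[
\langle D\varpi_P+\rho_M,\alpha^\vee\rangle=\sum_{i\in\theta}(Dn_i+c_i)\langle\alpha_i,\alpha^\vee\rangle+Dn_{i_P}\langle\alpha_P,\alpha^\vee\rangle .
\]
In the sum over $\theta$, each factor $Dn_i+c_i$ is $\ge 0$ by the choice of $D$ while each $\langle\alpha_i,\alpha^\vee\rangle\le 0$, so the whole sum is $\le 0$.

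It remains to control the last term, and this is the one genuinely nonformal point: I must show $\langle\alpha_P,\alpha^\vee\rangle\ge 0$, so that together with $n_{i_P}<0$ the term $Dn_{i_P}\langle\alpha_P,\alpha^\vee\rangle$ is $\le 0$. Writing $\alpha=\sum_{i\in I}k_i\alpha_i$ with all $k_i\ge 0$ and pairing against $\alpha^\vee$, the normalization $\langle\alpha,\alpha^\vee\rangle=2$ becomes $k_{i_P}\langle\alpha_P,\alpha^\vee\rangle=2-\sum_{i\in\theta}k_i\langle\alpha_i,\alpha^\vee\rangle$; since every $\langle\alpha_i,\alpha^\vee\rangle\le 0$ and $k_i\ge 0$, the right-hand side is $\ge 2>0$, which forces $\langle\alpha_P,\alpha^\vee\rangle>0$. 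Hence the last term is $\le 0$ and $\langle D\varpi_P+\rho_M,\alpha^\vee\rangle\le 0$, which is exactly Property RD. I expect the main obstacle to lie not in the algebra but in securing the two sign facts \emph{uniformly} in $w$: the sign of $\langle\alpha_P,\alpha^\vee\rangle$, which the coroot-normalization identity above delivers for every inversion root at once, and above all the existence of a single $D$ serving the infinitely many $w\in W^\theta$. The latter is what the argument is organized to handle, by confining all $w$-dependence to the pairings $\langle\alpha_i,\alpha^\vee\rangle$ while keeping the coefficients $n_i,c_i$ fixed, and by invoking Lemma~\ref{lem2-1} so that the $i_P$-term carries the correct sign independently of $w$.
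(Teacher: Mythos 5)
Your proof is correct, and its overall architecture coincides with the paper's: expand $D\varpi_P+\rho_M=\sum_{i\in\theta}m_i\alpha_i+m_{i_P}\alpha_P$ with $m_i\ge 0$ for $i\in\theta$ and $m_{i_P}=Dn_{i_P}<0$ via Lemma~\ref{lem2-1}, use the hypothesis to kill the $\theta$-part, and reduce everything to the sign of $\langle\alpha_P,\alpha^\vee\rangle$. The one genuine divergence is how that last sign is obtained. The paper proves $\langle\alpha_P,\alpha^\vee\rangle\ge 0$ by applying the hypothesis to the reflected element $w_{\alpha_P}w$, observing that $w_{\alpha_P}\alpha\in\Phi_{w^{-1}w_{\alpha_P}}$ and $w^{-1}w_{\alpha_P}\alpha_P=-w^{-1}\alpha_P>0$ --- which is precisely why the hypothesis is quantified over all $w\in W$ rather than only over $W^\theta$. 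You instead extract strict positivity from the normalization $\langle\alpha,\alpha^\vee\rangle=2$: writing $\alpha=\sum_i k_i\alpha_i$ with $k_i\ge 0$, the identity $k_{i_P}\langle\alpha_P,\alpha^\vee\rangle=2-\sum_{i\in\theta}k_i\langle\alpha_i,\alpha^\vee\rangle\ge 2$ forces $k_{i_P}>0$ and $\langle\alpha_P,\alpha^\vee\rangle>0$. Your variant is slightly more self-contained (it needs the hypothesis only for $w\in W^\theta$ and sidesteps the edge case $\alpha=\alpha_P$ in the reflection argument), while the paper's reflection trick is the one reused verbatim as inequality~\eqref{hp} in the later case-by-case analysis for the Feingold--Frenkel algebra. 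Both are complete proofs.
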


\begin{proof}
As in the proof of Lemma \ref{lem3-1}, for small $D>0$ we have
\[
D\varpi_P+\rho_M=\sum_{i\in \theta} m_i\alpha_i +m_{i_P}\alpha_P
\]
where $m_i>0$ for $i\in \theta$ and $m_{i_P}<0$. Take $w\in W^\theta$, $w\neq\textrm{id}$, and $\alpha\in\Phi_{w^{-1}}$. Then by assumption
\begin{equation}\label{deltam}
\langle\alpha_i, \alpha^\vee\rangle\leq0\quad\textrm{for}\quad i\in\theta.
 \end{equation}
 Since $w_{\alpha_P}\alpha \in \Phi_{w^{-1}w_{\alpha_P}}$ and $w^{-1}w_{\alpha_P}\alpha_P=-w^{-1}\alpha_P>0$, we have by assumption
\begin{equation}\label{hp}
\langle\alpha_P, \alpha^\vee\rangle=-\langle\alpha_P, w_{\alpha_P}\alpha^\vee\rangle   \geq 0.
\end{equation}
From (\ref{deltam}) and (\ref{hp}) it follows that for any $\alpha\in\Phi_{w^{-1}}$ one has
\[
\langle D\varpi_P+\rho_M,\alpha^\vee\rangle=\sum_{i\in I} m_i \langle\alpha_i,\alpha^\vee\rangle\leq 0.
\]
This proves that $P$ satisfies Property RD.
\end{proof}

Now we present a family of groups with parabolic subgroups satisfying Property RD. 

\begin{proposition} \hfill
If $\mathfrak g$ is a rank 2 hyperbolic Kac--Moody algebra with generalized Cartan matrix $\mathsf{A}=\begin{pmatrix} ~2 & -b\\-a & ~2\end{pmatrix}$ where $a,b\geq 2$ and $ab\geq 5$ then every maximal parabolic subgroup of $G_\R$ satisfies Property RD.
\end{proposition}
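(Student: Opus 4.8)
The plan is to verify the hypothesis of Proposition~\ref{lem4} for these rank-two algebras; once that combinatorial condition on roots holds, Proposition~\ref{lem4} immediately yields Property RD for both maximal parabolics, whose Levi subgroups (of type $A_1$) are finite-dimensional. Since $\mathsf{A}$ is symmetrizable with symmetrizer $\mathsf{D}=\mathrm{diag}(a,b)$, the invariant form has Gram matrix $\langle\alpha_i,\alpha_j\rangle$-data $(\alpha_1,\alpha_1)=2a$, $(\alpha_2,\alpha_2)=2b$, $(\alpha_1,\alpha_2)=-ab$. For a real root $\alpha$ one has $\langle\alpha_i,\alpha^\vee\rangle=2(\alpha_i,\alpha)/(\alpha,\alpha)$ with $(\alpha,\alpha)>0$, so $\langle\alpha_i,\alpha^\vee\rangle\le 0$ is equivalent to $(\alpha_i,\alpha)\le 0$. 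This reduces the whole statement to controlling the signs of inner products of simple roots with positive real roots.

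Next I would analyze the Weyl group, which here is the infinite dihedral group generated by $w_1$ and $w_2$. For $v=w^{-1}\ne\mathrm{id}$ exactly one simple root is kept positive, determined by the last letter of a reduced word for $v$: if $v$ ends in $w_2$ then $v\alpha_1>0$, and if $v$ ends in $w_1$ then $v\alpha_2>0$ (while $\Phi_{w^{-1}}=\emptyset$ when $v=\mathrm{id}$). Using the standard description of inversion sets, I would show that for $v$ ending in $w_2$ the set $\Phi_{w^{-1}}$ is an initial segment $\{\gamma_0,\ldots,\gamma_{\ell-1}\}$ of the chain of positive real roots $\gamma_0=\alpha_2$, $\gamma_1=w_2\alpha_1$, $\gamma_2=w_2w_1\alpha_2,\ldots$, and symmetrically for $v$ ending in $w_1$ one obtains an initial segment of the chain $\delta_0=\alpha_1$, $\delta_1=w_1\alpha_2,\ldots$. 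Because these initial segments exhaust the respective chains as $\ell\to\infty$, the hypothesis of Proposition~\ref{lem4} becomes exactly the two assertions $(\alpha_1,\gamma_k)\le 0$ and $(\alpha_2,\delta_k)\le 0$ for all $k\ge 0$, which are interchanged by the symmetry $1\leftrightarrow 2$, $a\leftrightarrow b$. So it suffices to treat the $\gamma$-chain.

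Writing $\gamma_k=m_k\alpha_1+n_k\alpha_2$, the roots in such a chain obey a three-term recurrence $\gamma_{k+1}=p_k\gamma_k-\gamma_{k-1}$ with $p_k=\langle\gamma_{k-1},\gamma_k^\vee\rangle\in\{a,b\}$, hence $p_k\ge 2$; this is a standard feature of rank-two root systems, read off directly from the simple reflection formulas. Since $(\alpha_1,\gamma_k)=a(2m_k-bn_k)$, I would set $q_k:=bn_k-2m_k$, so that $(\alpha_1,\gamma_k)\le 0$ is equivalent to $q_k\ge 0$, and $q_k$ satisfies the same recurrence $q_{k+1}=p_kq_k-q_{k-1}$ with initial values $q_0=b>0$ and $q_1=ab-2>0$. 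The final step is a short induction showing $q_{k+1}\ge q_k>0$: the hypotheses $a,b\ge 2$ and $ab\ge 5$ give $q_1>q_0$ (this is where $ab\ge 5$, rather than the affine value $4$, is used), and then $p_k\ge 2$ forces $q_{k+1}=p_kq_k-q_{k-1}\ge q_k+(q_k-q_{k-1})\ge q_k$, so the sequence is positive and strictly increasing and all $q_k>0$.

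I expect the main obstacle to be the bookkeeping in the second paragraph: correctly determining which simple root $v$ keeps positive and pinning down $\Phi_{w^{-1}}$ as an initial segment of the correct root chain, with the conventions for inversion sets and the labeling of the chain made consistent. The recurrence and induction in the last paragraph are then routine, the only essential arithmetic input being the strict inequality $q_1>q_0$, which is precisely what the hyperbolicity assumption $ab\ge 5$ guarantees.
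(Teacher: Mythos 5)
Your proof is correct and follows the same overall route as the paper: both arguments reduce the statement to Proposition \ref{lem4} and then verify the sign condition $\langle\alpha_i,\alpha^\vee\rangle\le 0$ for $\alpha$ running over the two alternating chains of positive real roots that exhaust the inversion sets $\Phi_{w^{-1}}$. Where you differ is in how that sign is checked. The paper imports from \cite{CGLLM} the closed-form expressions $(w_1w_2)^n\alpha_1=h_{2n+1}\alpha_1+\tfrac{\sqrt a}{\sqrt b}h_{2n}\alpha_2$ (and its companion) with $h_n$ built from $\mu=\tfrac{\sqrt{ab}+\sqrt{ab-4}}{2}$, and then verifies inequalities such as $\sqrt{ab}\,h_{2n+1}>2h_{2n}$ directly. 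You instead push the three-term recurrence $\gamma_{k+1}=p_k\gamma_k-\gamma_{k-1}$, $p_k\in\{a,b\}$ (hence $p_k\ge2$), down to the scalar sequence $q_k=bn_k-2m_k$ and prove $q_{k+1}\ge q_k>0$ by induction; this is self-contained, avoids the irrational $\mu$, and your bookkeeping (which simple root $w^{-1}$ keeps positive, and $\Phi_{w^{-1}}$ as an initial segment of the corresponding chain) is consistent with the conventions of the paper. One small correction to your own commentary: the strict inequality $q_1>q_0$ is not where $ab\ge5$ enters in an essential way --- the weak inequality $q_1\ge q_0$, equivalent to $b(a-1)\ge2$, already holds for all $a,b\ge2$ and suffices for the induction, and Proposition \ref{lem4} only requires the non-strict condition $\langle\alpha_i,\alpha^\vee\rangle\le0$; the hypothesis $ab\ge5$ is really there to place $\mathfrak g$ in the hyperbolic (non-affine) setting of the paper rather than to make your estimate work.
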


\begin{proof}
First recall that $W$ is the infinite dihedral group generated by $w_1$ and $w_2$. Without loss of generality, assume that $w^{-1}=(w_2w_1)^m$ or $w_1(w_2w_1)^m$ for some $m \ge 0$. Then each $\alpha \in \Phi_{w^{-1}}$ has the form either $(w_1 w_2)^n \alpha_1$ or $(w_1 w_2)^n w_1 \alpha_2$ for some $n \ge 0$. As in the proof of \cite[Proposition 4.4]{CGLLM}, define 
\[ \mu = \frac {\sqrt{ab}+\sqrt{ab-4}} 2 \quad \text{ and } \quad h_n= \frac 1 {\mu - \mu^{-1}}(\mu^n - \mu^{-n}) . \] Then we get
\begin{align}
(w_1 w_2)^n \alpha_1 &= h_{2n+1} \alpha_1 + \tfrac {\sqrt a}{\sqrt b} h_{2n} \alpha_2 ,\label{w1w2-1}\\
(w_1 w_2)^n w_1 \alpha_2 &= \tfrac{\sqrt a}{\sqrt b} h_{2n+2} \alpha_1 + h_{2n+2} \alpha_2 .\label{w1w2-2} 
\end{align}

Since $w^{-1} \alpha_2 >0$ and $w^{-1} \alpha_1 <0$, we need only to show $\la \alpha_2, \alpha^\vee \ra <0$ for $\alpha \in \Phi_{w^{-1}}$ in order to apply Proposition \ref{lem4}. Because $\la \alpha_2, \alpha^\vee \ra$ and $\la \alpha, \alpha_2^\vee \ra$ have the same sign, we consider the latter for convenience.  
When $\alpha = (w_1 w_2)^n \alpha_1$, we obtain from \eqref{w1w2-1} and \cite[(4.10)]{CGLLM} 
\begin{align*} \la \alpha , \alpha_2^\vee \ra &= \la h_{2n+1} \alpha_1 + \tfrac {\sqrt a}{\sqrt b} h_{2n} \alpha_2, \alpha_2^\vee \ra \\
&= - \tfrac {\sqrt a}{\sqrt b} ( \sqrt{ab} h_{2n+1}- 2 h_{2n} ) 
\\& <0. \end{align*}
Similarly, when $\alpha = (w_1 w_2)^n w_1 \alpha_2$, we obtain from \eqref{w1w2-2} and \cite[(4.10)]{CGLLM} 
\begin{align*} \la \alpha , \alpha_2^\vee \ra &= \la \tfrac {\sqrt a}{\sqrt b} h_{2n+2} \alpha_1 +h_{2n+1} \alpha_2, \alpha_2^\vee \ra \\
&= - \sqrt{ab} h_{2n+2}+ 2 h_{2n+1}  \\
& <0. \end{align*}
Now it follows from Proposition \ref{lem4} that parabolic subgroups satisfy Property RD. 
\end{proof}

\begin{ex}[Non-example] \label{ex-non} 
Consider the following generalized Cartan matrix and its Dynkin diagram. \[\mathsf A =  {\scriptsize\begin{pmatrix} 2&-1&0&0&0&0&0 \\-1&2&-1&0&0&0&-1\\0&-1&2&-1&0&0&0\\0&0&-1&2&0&0& 0\\0&0&0&0&2&0&-1\\0&0&0&0&0&2&-1\\ 0&-1&0&0&-1&-1&2\end{pmatrix} } \qquad
\raisebox{2em}{\xymatrix@R=7ex{ *{\circ}<3pt> \ar@{-}[r]_<{1}  &
*{\circ}<3pt> \ar@{-}[r]_<{2}  &*{\circ}<3pt>
\ar@{-}[r]_<{3}  & *{\circ}<3pt> \ar@{}[r]_<{4} &\\  *{\circ}<3pt> \ar@{-}[r]^<{5}  &
*{\bullet}<3pt> \ar@{-}[r]^<{7}  \ar@{-}[u]  &  *{\circ}<3pt> \ar@{}[r]^<{6} &  }}
\]
Take $\theta=\{1,2,3,4,5,6\}$, $w = w_{\alpha_7}w_{\alpha_2} w_{\alpha_1}w_{\alpha_3}w_{\alpha_2}w_{\alpha_7}$ and $\alpha = \alpha_1+ 2\alpha_2+\alpha_3+\alpha_7$. Then $w \in W^\theta$ and $\alpha \in \Phi_{w^{-1}}$. 
Since $\langle \varpi_P, \alpha^\vee \rangle =1$ and $\langle \rho_M, \alpha^\vee \rangle=0$, we have \[  \langle D\varpi_P+\rho_M, \alpha^\vee \rangle = D .\] 
Thus the maximal parabolic subgroup $P_\theta$ does not satisfy Property RD.
\end{ex}

\begin{remark}
Generally, it seems difficult to show that a parabolic subgroup satisfies Property RD. However, some computer computations suggest that a large class of Kac--Moody groups have parabolic subgroups with Property RD.
In Example \ref{ex-non}, if we take $\theta=\{ 1,2,3,4,5,7\}$, then it appears that $P_\theta$ satisfies Property RD.
\end{remark}

In the rest of this section, we consider the rank 3 hyperbolic Kac--Moody algebra $\mathcal{F}$ studied by Feingold and Frenkel \cite{FF}, whose Cartan matrix is
\[
\mathsf{A}=(a_{ij})=\begin{pmatrix} 2 & -2 & 0 \\ -2 & 2 & -1 \\ 0 & -1 & 2\end{pmatrix}.
\]
 We will see below that
the condition of Proposition \ref{lem4}  fails for the algebra $\mathcal{F}$, but there are only finitely many exceptions and we can still show that every maximal parabolic subgroup with finite-dimensional Levi satisfies Property RD.

 Since  $\mathsf{A}$ is symmetric, we may identify $\langle\cdot,\cdot\rangle$ with the unique bilinear form $(\cdot|\cdot)$ on  $\mathfrak{h}^*$ satisfying $(\alpha_i|\alpha_j)=a_{ij}$, $1\le i,j \le 3$.
Define the linear isomorphism of vector spaces $\psi: \mathfrak{h}^* \overset{\cong}{\longrightarrow} S_2(\mathbb{C})$, the vector space of $2\times 2$ symmetric  complex matrices, by
\begin{equation}\label{psiFFdef}
\psi(x\alpha_1+y\alpha_2+z\alpha_3)=\left( \begin{matrix} y-z & y-x \\ y-x & z\end{matrix}\right),
\end{equation}
so that
\[
\psi(\alpha_1)=\begin{pmatrix} ~0 & -1 \\ -1 & ~0\end{pmatrix},\quad \psi(\alpha_2)=\begin{pmatrix} 1 & 1 \\ 1 & 0\end{pmatrix},
\quad \psi(\alpha_3)=\begin{pmatrix}-1 & 0 \\ ~0 & 1\end{pmatrix}
\]
and
$$(\lambda | \lambda) \ = \ -2\, \det\psi(\lambda),\ \lambda\in \frak{h}^*.$$
For convenience we shall henceforth identify $\frak{h}^*$ with $S_2(\C)$ using $\psi$. Then, for example, the inner product $(\cdot|\cdot)$ is given by
\[ \left ( \begin{pmatrix} \nu_1 & \nu_2 \\ \nu_2 & \nu_3 \end{pmatrix} \bigg| \begin{pmatrix} \mu_1 & \mu_2 \\ \mu_2 & \mu_3 \end{pmatrix} \right ) = - \nu_3 \mu_1 + 2  \nu_2 \mu_2 - \nu_1 \mu_3 .\]
The Weyl group $W$ of $\mathcal{F}$ is generated by
\begin{equation}\label{weyl}
w_1= \begin{pmatrix} 1 & 0 \\ 0 & -1 \end{pmatrix},\quad w_2= \begin{pmatrix} -1 & 1 \\ 0 & 1\end{pmatrix},\quad w_3= \begin{pmatrix}
0 & 1 \\ 1 & 0\end{pmatrix}
\end{equation}
and acts on  $S_2(\mathbb{C})$ by $g\cdot S=gSg^t$; it is
 isomorphic to $PGL_2(\mathbb{Z})$.

The set of  real roots  is given by
$$ \Phi_{\text{re}}=\left \{ \begin{pmatrix} n_1 & n_2 \\ n_2 & n_3 \end{pmatrix} \in S_2(\mathbb Z) \  \mid \  n_1n_3 -n_2^2=-1\right \},
$$
and
 the set of positive real roots is determined to be
\begin{equation}\label{real} \Phi_{\text{re}}^+=\left \{ \begin{pmatrix} n_1 & n_2 \\ n_2 & n_3 \end{pmatrix} \in S_2(\mathbb Z) \  \mid  \ \,
n_1n_3 -n_2^2=-1, \ n_1+n_3\geq n_2, \ n_1+n_3\geq 0,\  n_3\geq 0 \right \} .
\end{equation}
For the rest of this section we write
\begin{equation}\label{root}
\alpha=\begin{pmatrix} n_1 & n_2 \\ n_2 & n_3 \end{pmatrix}\in\Phi_{\textrm{re}}^+ \ \ \ \text{and} \ \ \  v=\begin{pmatrix} a & b \\ c & d\end{pmatrix}\in W,
\end{equation}
and frequently use the computation
\begin{equation}\label{action}
v\cdot\alpha =
\left( \begin{matrix}
a^2n_1+2abn_2+b^2n_3  & acn_1+(ad+bc)n_2+bdn_3\\
acn_1+(ad+bc)n_2+bdn_3 & c^2n_1+2cdn_2+d^2n_3
\end{matrix}
\right).
\end{equation}
In particular,
\begin{align}
v\cdot\alpha_1 \ = \ &
\left( \begin{matrix}
-2ab & -bc-ad\\
-bc-ad & -2cd
\end{matrix}
\right), \label{va1}\\
v\cdot\alpha_2 \ = \ &
\left( \begin{matrix}
a^2+2ab & ac+bc+ad\\
 ac+bc+ad & c^2+2cd
\end{matrix}
\right),\label{va2}\\
\text{and} \ \ \
v\cdot\alpha_3 \ = \ &
\left( \begin{matrix}
b^2-a^2 & bd-ac\\
bd-ac & d^2-c^2
\end{matrix}
\right).\label{va3}
\end{align}

Since the elements of $\Phi_v$ are precisely the positive real roots flipped by $v$, a root $\alpha\in\Phi_v$ is constrained by the inequalities
\begin{align}
& c^2n_1+2cdn_2+d^2n_3\leq 0,\label{neq1}\\
& (a^2+c^2)n_1+2(ab+cd)n_2+(b^2+d^2)n_3\leq 0, \ \   \text{and} \label{neq2}\\
& (a^2+c^2)n_1+2(ab+cd)n_2+(b^2+d^2)n_3\leq acn_1+(ad+bc)n_2+bdn_3,  \label{neq3}
\end{align}
in addition to  (\ref{real}).

We have two standard maximal parabolic subgroups $P_1$ and $P_2$ with finite dimensional Levi subgroups $M_1$ and $M_2$, which correspond to simple roots $\alpha_1$ and $\alpha_2$ respectively. Note that in terms of previous notations, they have semisimple subgroups $L_1\cong SL(3)$ and $L_2\cong SL(2)\times SL(2)$ respectively.

We will use the following lemma  to prove that the parabolic subgroups $P_1$ and $P_2$ satisfy Property RD (Proposition \ref{FF-1}). The proof of this lemma is by direct calculation and will be omitted. 

\begin{lemma}\label{fflem}
If $w\in W$, $w\neq\mathrm{id}$ and $w^{-1}\alpha_i>0$, then $\langle\alpha_i,\alpha^\vee\rangle\leq 0$ for any $\alpha\in\Phi_{w^{-1}}$ unless

(i) $i=2$ and $\alpha$ equals
\[
\beta_1=\begin{pmatrix} 0 & 1 \\ 1 & 1\end{pmatrix}\quad\textrm{or}\quad \beta_2=\begin{pmatrix} 1 & 2 \\ 2 & 3\end{pmatrix},
\]
or
(ii) $i=3$ and $\alpha$ equals $\beta_1$ as above or
\[
\beta_3=\begin{pmatrix} 0 & -1 \\ -1 & 1\end{pmatrix}.
\]
Moreover in these exceptional cases one has $\langle\alpha_i,\alpha^\vee\rangle=1$.
\end{lemma}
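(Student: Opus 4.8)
The plan is to verify Lemma~\ref{fflem} by a direct case analysis of the two simple roots $\alpha_i$ with $i \in \{2,3\}$ that can fail the inequality, using the explicit matrix model of the Feingold--Frenkel algebra. (The case $i=1$ never produces an exception, and this should be checked as well.) The key reduction is that $\langle \alpha_i, \alpha^\vee\rangle$ and $\langle \alpha, \alpha_i^\vee\rangle$ share the same sign, and via the inner product $(\cdot|\cdot)$ on $S_2(\mathbb{C})$ the quantity $\langle \alpha, \alpha_i^\vee\rangle = 2(\alpha|\alpha_i)/(\alpha_i|\alpha_i) = (\alpha|\alpha_i)$ becomes a simple bilinear expression in the entries $n_1,n_2,n_3$ of $\alpha$. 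First I would write out, for each $i$, the explicit value of $(\alpha|\alpha_i)$ using the formula $\left(\begin{smallmatrix}\nu_1&\nu_2\\\nu_2&\nu_3\end{smallmatrix}\,\big|\,\begin{smallmatrix}\mu_1&\mu_2\\\mu_2&\mu_3\end{smallmatrix}\right) = -\nu_3\mu_1 + 2\nu_2\mu_2 - \nu_1\mu_3$ together with the matrices $\psi(\alpha_i)$ given in \eqref{psiFFdef}.

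Next I would impose the constraint that $\alpha \in \Phi_{w^{-1}}$ for some $w \in W$ with $w^{-1}\alpha_i > 0$. The condition $w^{-1}\alpha_i > 0$ means $\alpha_i \notin \Phi_{w^{-1}}$, i.e. $\alpha_i$ is \emph{not} among the positive roots flipped negative by $w^{-1}$; equivalently $w\alpha_i$ is again a positive root. Combined with $\alpha \in \Phi_{w^{-1}}$ (so $\alpha > 0$ but $w^{-1}\alpha < 0$), this places $\alpha$ and $\alpha_i$ on opposite sides with respect to the reflection geometry of $w^{-1}$. The plan is to show that whenever $(\alpha|\alpha_i) > 0$, the positivity of $\langle\alpha,\alpha_i^\vee\rangle$ forces $\alpha$ to lie close to $\alpha_i$ in root-string terms, so that $w^{-1}\alpha < 0$ while $w^{-1}\alpha_i > 0$ can only happen for finitely many $\alpha$. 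Concretely, since $\langle \alpha, \alpha_i^\vee\rangle$ takes integer values and $\langle\alpha_i,\alpha_i^\vee\rangle = 2$, a positive value means $w_i\alpha = \alpha - \langle\alpha,\alpha_i^\vee\rangle\alpha_i$ is a root lower than $\alpha$; controlling which such $\alpha$ remain positive real roots in $\Phi_{w^{-1}}$ subject to $w^{-1}\alpha_i>0$ cuts the possibilities down to the explicit list $\beta_1, \beta_2, \beta_3$.

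The main obstacle is to confirm that the exceptional set is genuinely \emph{finite} and consists exactly of the three listed roots, rather than merely bounding it. The natural mechanism is to use the defining inequalities \eqref{neq1}--\eqref{neq3} for $\alpha \in \Phi_v$ (with $v = w^{-1}$, say $v = \left(\begin{smallmatrix}a&b\\c&d\end{smallmatrix}\right) \in W \cong PGL_2(\mathbb{Z})$) together with the real-root condition \eqref{real}, namely $n_1n_3 - n_2^2 = -1$. One shows that if $(\alpha|\alpha_i) > 0$ then the system of inequalities forces the entries of $v$ into a bounded region, and then enumerate the finitely many resulting roots $\alpha$ directly. For $i=2$ one uses $\psi(\alpha_2) = \left(\begin{smallmatrix}1&1\\1&0\end{smallmatrix}\right)$ so that $(\alpha|\alpha_2) = -n_1 + 2n_2 - n_3$, and checks that $(\alpha|\alpha_2) > 0$ with $\alpha \in \Phi_{w^{-1}}^{\phantom{+}}$ leaves only $\beta_1$ and $\beta_2$; for $i=3$ one uses $\psi(\alpha_3) = \left(\begin{smallmatrix}-1&0\\0&1\end{smallmatrix}\right)$ so that $(\alpha|\alpha_3) = -n_3 + n_1 = n_1 - n_3$, leaving only $\beta_1$ and $\beta_3$. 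Since the paper states the proof is by direct calculation and omits it, I would carry out exactly this finite search and record that in every exceptional case $\langle\alpha_i,\alpha^\vee\rangle = 1$, which is immediate from evaluating the bilinear form on the listed $\beta_j$.
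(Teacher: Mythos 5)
The paper itself omits the proof of this lemma (``by direct calculation''), so your overall plan --- reduce $\langle\alpha_i,\alpha^\vee\rangle$ to the bilinear form $(\alpha|\alpha_i)$ in the $S_2(\mathbb{Z})$ model and then run a finite search constrained by \eqref{real}, \eqref{neq1}--\eqref{neq3} and $w^{-1}\alpha_i>0$ --- is the natural one and presumably what the authors did. However, both explicit formulas on which your search rests are computed incorrectly. With $\alpha=\left(\begin{smallmatrix}n_1&n_2\\ n_2&n_3\end{smallmatrix}\right)$ and the paper's pairing $-\nu_3\mu_1+2\nu_2\mu_2-\nu_1\mu_3$, one gets $(\alpha|\alpha_2)=2n_2-n_3$ (not $-n_1+2n_2-n_3$) and $(\alpha|\alpha_3)=n_3-n_1$ (not $n_1-n_3$); the quickest sanity check is that your expressions give $(\alpha_2|\alpha_2)=1$ and $(\alpha_3|\alpha_3)=-2$ instead of $2$. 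These are not harmless slips: under your $\alpha_2$-formula the exceptional root $\beta_2$ evaluates to $0$ and would be discarded, and under your $\alpha_3$-formula (whose sign is reversed) both $\beta_1$ and $\beta_3$ evaluate to $-1$, so the $i=3$ search would return an entirely different candidate set. An enumeration built on these expressions cannot reproduce the statement of the lemma, including the final claim that the exceptional values all equal $1$.

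A secondary gap is the finiteness mechanism. There are infinitely many positive real roots with $(\alpha|\alpha_2)=2n_2-n_3>0$, and likewise for $i=3$; note also that $(\alpha|\alpha_1)=-2n_2$ is positive for $\beta_3$, so the $i=1$ case is not vacuous either. The heart of the lemma is therefore to show that for all but finitely many such $\alpha$, \emph{every} $w$ with $\alpha\in\Phi_{w^{-1}}$ also satisfies $w^{-1}\alpha_i<0$. You assert that $(\alpha|\alpha_i)>0$ ``forces the entries of $v$ into a bounded region,'' but you never exhibit the inequality that accomplishes this, and the quantifier structure (all $w$ must be ruled out simultaneously for each bad $\alpha$) is exactly where the work lies. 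Before rerunning the search with the corrected formulas, you should make this step explicit, e.g.\ by combining \eqref{neq1}--\eqref{neq3} for $v=w^{-1}$ with the positivity of $v\alpha_i$ read off from \eqref{va1}--\eqref{va3}, and showing these constraints together with $(\alpha|\alpha_i)>0$ admit only finitely many pairs $(v,\alpha)$.
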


\begin{proposition}\label{FF-1}
The parabolic subgroups $P_1$ and $P_2$ both satisfy Property RD.
\end{proposition}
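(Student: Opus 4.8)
The plan is to establish Property RD for $P_1$ and $P_2$ by applying Proposition \ref{lem4}, handling the finitely many exceptions from Lemma \ref{fflem} directly. Recall that Proposition \ref{lem4} shows Property RD holds whenever the condition $\langle \alpha_i, \alpha^\vee\rangle \leq 0$ is satisfied for all $\alpha_i$ with $w^{-1}\alpha_i>0$ and all $\alpha \in \Phi_{w^{-1}}$. By Lemma \ref{fflem}, this hypothesis holds for every $w \in W^\theta$ and every $\alpha \in \Phi_{w^{-1}}$ \emph{except} in the listed exceptional cases, where $\langle \alpha_i, \alpha^\vee\rangle = 1$. So the main content is to verify that even for the exceptional roots $\beta_1, \beta_2, \beta_3$, the genuine Property RD inequality $\langle D\varpi_P + \rho_M, \alpha^\vee\rangle \leq 0$ still holds for $D>0$ sufficiently small.

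First I would treat $P_1$ (so $\theta = \{2,3\}$, $\alpha_P = \alpha_1$). Here $i_P = 1$, and the exceptional cases of Lemma \ref{fflem} concern $i=2$ and $i=3$, both lying in $\theta$. The key computation is to evaluate $\langle D\varpi_1 + \rho_{M_1}, \beta_k^\vee\rangle$ for each exceptional $\beta_k$ using the inner product formula on $S_2(\C)$ given in the excerpt, after writing $\varpi_1$ and $\rho_{M_1}$ in the matrix coordinates. Since $\langle \varpi_1, \beta_k^\vee\rangle > 0$ by Remark \ref{rmk-sh}, the task is to check that $\langle \rho_{M_1}, \beta_k^\vee\rangle < 0$ strictly for each exceptional root, so that the full expression is negative for small $D$; then I would reconcile this with the non-exceptional roots, which are already controlled by Proposition \ref{lem4}'s argument. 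By Remark \ref{rmk-sh}, Property RD is in fact equivalent to $\langle \rho_M, \alpha^\vee\rangle < 0$ for all $\alpha \in \Phi_{w^{-1}}$, so I would phrase the verification directly in terms of $\rho_M$ to avoid carrying $D$ through the calculation.

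Next I would treat $P_2$ (so $\theta = \{1,3\}$, $\alpha_P = \alpha_2$), with $L_2 \cong SL(2)\times SL(2)$, and repeat the analysis. The structure is identical: for non-exceptional roots, Proposition \ref{lem4} applies verbatim, and for each exceptional $\beta_k$ I would compute $\langle \rho_{M_2}, \beta_k^\vee\rangle$ in matrix coordinates and confirm it is strictly negative. An important subtlety is that in the proof of Proposition \ref{lem4}, the hypothesis $\langle\alpha_i, \alpha^\vee\rangle \leq 0$ is used in \emph{two} places — equation \eqref{deltam} for $i \in \theta$ and equation \eqref{hp} for $\alpha_P$ via the reflection $w_{\alpha_P}$ — so I must check that the exceptions do not break the $\eqref{hp}$ step either; since the exceptions only affect $\langle \alpha_i, \alpha^\vee\rangle$ for $i \in \theta$, the inequality \eqref{hp} remains valid, and only the sign bookkeeping in \eqref{deltam} needs correcting by the direct computation for $\beta_1, \beta_2, \beta_3$.

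The main obstacle I anticipate is not any single calculation but rather correctly identifying, for each parabolic, which of the exceptional roots $\beta_1, \beta_2, \beta_3$ actually occur in some $\Phi_{w^{-1}}$ with $w \in W^\theta$, and ensuring the strict negativity of $\langle \rho_M, \beta_k^\vee\rangle$ compensates for the positive $\langle \varpi_P, \beta_k^\vee\rangle$ term uniformly in the small parameter $D$. The computation itself reduces to evaluating a handful of explicit bilinear pairings using the matrix model, which is routine once $\rho_{M_1}$ and $\rho_{M_2}$ are written down explicitly; the care lies in verifying that the contribution of $\rho_M$ dominates strictly, so that shrinking $D$ suffices, and in confirming consistency with the general case covered by Lemma \ref{fflem} and Proposition \ref{lem4}.
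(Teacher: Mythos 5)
Your overall architecture matches the paper's: invoke Proposition~\ref{lem4} for the non-exceptional roots and treat the finitely many exceptions of Lemma~\ref{fflem} by hand. However, your central plan for the exceptions --- ``check that $\langle \rho_M, \beta_k^\vee\rangle < 0$ strictly for each exceptional root'' --- does not go through, because that inequality is simply \emph{false} for several of them. Concretely, writing the exceptional roots in simple-root coordinates, $\beta_1=\alpha_2+\alpha_3$, $\beta_2=2\alpha_1+4\alpha_2+3\alpha_3$ and $\beta_3=2\alpha_1+\alpha_2+\alpha_3$, one computes $\langle\rho_{M_1},\beta_1^\vee\rangle=2>0$ and $\langle\rho_{M_1},\beta_2^\vee\rangle=3>0$ for $P_1$, and $\langle\rho_{M_2},\beta_3^\vee\rangle=3/2>0$ for $P_2$. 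No amount of shrinking $D$ rescues the inequality $\langle D\varpi_P+\rho_M,\alpha^\vee\rangle\le 0$ for these roots. The actual resolution, which is the substance of the paper's proof, is to show that these problematic exceptional roots can never lie in $\Phi_{w^{-1}}$ for a nontrivial $w\in W^\theta$: for $P_1$, $\beta_1=\alpha_2+\alpha_3$ satisfies $w^{-1}\beta_1>0$ since $w^{-1}\alpha_2,w^{-1}\alpha_3>0$, and $\beta_2$ is excluded because $\langle\alpha_3,\beta_2^\vee\rangle=2>0$ would contradict Lemma~\ref{fflem} applied with $i=3$; for $P_2$, $\beta_3$ is excluded because $\langle\alpha_1,\beta_3^\vee\rangle=2$. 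Only after these exclusions is one left with a single genuine exception per parabolic ($\beta_3$ for $P_1$, $\beta_1$ for $P_2$), and only for those does the sign of $\langle\rho_M,\cdot\rangle$ save the day ($-2$ and $-1/2$ respectively). You flagged ``identifying which exceptional roots actually occur'' as an anticipated difficulty, but your stated fallback (strict negativity against $\rho_M$) is not available for the roots that must instead be excluded, so the exclusion step is not optional bookkeeping --- it is the only viable argument for them.

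There is a second gap in your treatment of \eqref{hp}. You assert that the exceptions ``only affect $\langle\alpha_i,\alpha^\vee\rangle$ for $i\in\theta$,'' so the reflection argument establishing $\langle\alpha_P,\alpha^\vee\rangle\ge 0$ survives unchanged. That is true for $P_1$ (where $i_P=1$ and Lemma~\ref{fflem} has no exceptions for $i=1$), but false for $P_2$: there $\alpha_P=\alpha_2$, and the $i=2$ exceptions of Lemma~\ref{fflem} bear directly on the step $\langle\alpha_2,\alpha^\vee\rangle=-\langle\alpha_2,(w_2\alpha)^\vee\rangle\ge 0$, since $w_2\alpha$ could a priori equal $\beta_1$ or $\beta_2$. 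One must separately rule out $\alpha=w_2\beta_1$ (which equals $\alpha_3$, hence is not flipped by $w^{-1}$) and $\alpha=w_2\beta_2$ (excluded because $\langle\alpha_3,(w_2\beta_2)^\vee\rangle=3$ contradicts Lemma~\ref{fflem}). Without these two checks the proof of Property RD for $P_2$ is incomplete.
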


\begin{proof}     Recall that for $j=1,2$ and $D>0$ sufficiently small, one has
\[
D\varpi_j+\rho_{M_j}=\sum_i m_i\alpha_i
\]
where $m_j<0$ and $m_i>0$ for $i\in \theta_j:=\{1,2,3\}\setminus\{j\}$. Then we need to show that taking smaller $D$ if necessary,  one has
\[
\langle D\varpi_j+\rho_{M_j},\alpha^\vee\rangle=\sum_i m_i \langle\alpha_i, \alpha^\vee\rangle<0
\]
for any $w\in W^{\theta_j}$ and $\alpha\in\Phi_{w^{-1}}$. Note that $\langle\alpha_i, \alpha^\vee\rangle$, $i=1,2,3$, cannot be all zero. We shall consider $P_1$ and $P_2$ separately.

(1) Assume that $j=1$. Then $w^{-1}\alpha_1<0$ and $w^{-1}\alpha_i>0$ for $i=2,3$. By Lemma \ref{fflem}  and (\ref{hp}), one always has
$\langle\alpha_1,\alpha^\vee\rangle\geq 0$ for $\alpha\in\Phi_{w^{-1}}$, and it suffices to consider the exceptional cases when $\alpha=\beta_i$ for some $i=1,2,3$.

Since $\beta_1=\alpha_2+\alpha_3$,
we have $w^{-1}\beta_1>0$, hence $\alpha=\beta_1$ cannot happen. Since $\langle\alpha_3, \beta_2^\vee\rangle=2$, it follows from Lemma \ref{fflem} that $\alpha=\beta_2$ cannot happen either. Finally if $\alpha=\beta_3$, then $\langle\rho_{M_1}, \beta^\vee_3\rangle=-2<0$, hence $\langle D\varpi_1+\rho_{M_1}, \beta_3^\vee\rangle=D-2<0$ for $D<2$.

(2) Assume that $j=2$. Then $w^{-1}\alpha_2<0$ and $w^{-1}\alpha_i>0$ for $i=1,3$.  By Lemma \ref{fflem} and (\ref{hp}), one has $\langle\alpha_2,\alpha^\vee\rangle\geq 0$ for $\alpha\in\Phi_{w^{-1}}$ unless $\alpha=w_2\beta_1$ or $w_2\beta_2$. However,  we find that $w_2\beta_1=\alpha_3$ and $\langle\alpha_3, w_2\beta_2^\vee\rangle=3$, hence $\alpha\neq w_2\beta_1, w_2\beta_2$, thanks to $w^{-1}\alpha<0$ and Lemma \ref{fflem} respectively. Thus we have proved that $\langle \alpha_2,\alpha^\vee\rangle\geq 0$ always holds.

By Lemma \ref{fflem} again, it suffices to consider the exceptional cases $\alpha=\beta_1$ or $\beta_3$. For $\alpha=\beta_1$ we have
$\langle\rho_{M_2}, \beta^\vee_1\rangle=-1/2<0$, hence $\langle D\varpi_2+\rho_{M_2}, \beta_1^\vee\rangle=D-1/2<0$ for $D<1/2$. The case $\alpha=\beta_3$ cannot happen, because $\langle\alpha_1,\beta_3^\vee\rangle=2$ which violates Lemma \ref{fflem}.
\end{proof}

\end{document}